\newtheorem{theorem}{Theorem}
\newtheorem*{theorem*}{Theorem}
\newtheorem{proposition}{Proposition}
\newtheorem{lemma}{Lemma}
\theoremstyle{remark}
\title[Hankel Operators on Bergman spaces of Reinhardt Domains]{Hankel Operators on the Bergman spaces of Reinhardt
Domains and Foliations of Analytic Disks}
\author{Timothy G. Clos}
\address[Timothy G. Clos]{Bowling Green State University, 
	Department of Mathematics and Statistics,  Bowling Green, Ohio 43403 }
\email{clost@bgsu.edu}
\subjclass[2010]{Primary 47B35; Secondary 32W05}
\keywords{Hankel operators, Reinhardt domains, analytic disks}
\date{\today}
\begin{document}
\begin{abstract}
Let $\Omega\subset \mathbb{C}^2$ be a bounded pseudoconvex complete Reinhardt domain with a smooth boundary.
We study the behavior of analytic structure in the boundary of $\Omega$ and obtain a compactness result for Hankel operators on 
the Bergman space of $\Omega$.

\end{abstract}
\maketitle

\section{Introduction} 
Let $\Omega\subset \mathbb{C}^n$ for $n\geq 2$ be a bounded domain.  We let $dV$ be the (normalized) Lebesgue volume measure on $\Omega$.  Then $L^2(\Omega)$ is the space of measurable, square integrable functions on $\Omega$.  Let $\mathcal{O}_{\Omega}$ be the collection of all holomorphic (analytic) functions on $\Omega$.  Then the Bergman space $A^2(\Omega):=\mathcal{O}_{\Omega}\cap L^2(\Omega)$ is a closed subspace of $L^2(\Omega)$, a Hilbert space.  Therefore, there exists an orthogonal projection
$P:L^2(\Omega)\rightarrow A^2(\Omega)$ called the Bergman projection.  Then the Hankel operator with symbol $\phi\in L^{\infty}(\Omega)$ is defined as \[H_{\phi}f:=(I-P)(\phi f)\] where $I$ is the identity operator and $f\in A^2(\Omega)$.

\section{Previous Work}
  
   Compactness of Hankel operators on the Bergman spaces of bounded domains and its relationship between analytic structure in the boundary of these domains is an ongoing research topic.  In one complex dimension, Axler in \cite{Axler86} completely characterizes compactness of Hankel operators with conjugate holomorphic, $L^2$ symbols.  There, the emphasis is on whether the symbol belongs to the little Bloch space.  This requires that the derivative of the complex conjugate of the symbol satisfy a growth condition near the boundary of the domain. \\

 The situation is different in several variables for conjugate holomorphic symbols.  In \cite{clos}, the author completely characterizes compactness of Hankel operator with conjugate holomorphic symbols on convex Reinhardt domains in $\mathbb{C}^n$ if the boundary contains a certain class of analytic disks.  The proof relied on using the analytic structure in the boundary to show that a compact Hankel operator with a conjugate holomorphic symbol must be the zero operator, assuming certain conditions on the boundary of the domain.  In particular, the symbol is identically constant if certain conditions are satisfied.  An example of a domain where these conditions are satisfied is the polydisk in $\mathbb{C}^n$ (as seen in \cite{Le10} and \cite{clos}).\\
 
 In \cite{CelZey} the authors studied the compactness of Hankel operators with symbols continuous up to the closure of bounded pseudoconvex domains via compactness multipliers.  They showed if $\phi\in C(\overline{\Omega})$ is a compactness multiplier then $H_{\phi}$ is compact on $A^2(\Omega)$.  The authors of \cite{CelZey} approached the problem using the compactness estimate machinery developed in \cite{StraubeBook}.\\  
 
Hankel operators with symbols continuous up to the closure of the domain is also studied in \cite{CuckovicSahutoglu09} and \cite{ClosSahut}.    The paper \cite{CuckovicSahutoglu09} considered Hankel operators with symbols that are $C^1$-smooth up to the closure of bounded convex domains in $\mathbb{C}^2$.  The paper \cite{ClosSahut} considered symbols that are continuous up to the closure of bounded convex Reinhardt domains in $\mathbb{C}^2$.  Thus the regularity of the symbol was reduced at the expense of a smaller class of domains.\\
   
   Many of these results characterize the compactness of these operators by the behavior of the symbol along analytic structure in the domain.  For bounded pseudoconvex domains in $\mathbb{C}^n$, compactness of the $\overline{\partial}$-Neumann operator implies the compactness of Hankel operators with symbols continuous up to the closure of the domain.  See \cite{FuSt} and \cite{StraubeBook} for more information on compactness of the $\overline{\partial}$-Neumann operator.  For example the ball in $\mathbb{C}^n$ has compact $\overline{\partial}$-Neumann operator and hence any Hankel operator with symbol continuous up the closure of the ball is compact on the Bergman space of the ball.  The compactness of the $\overline{\partial}$-Neumann operator on the ball in $\mathbb{C}^n$ follows from the convexity of the domain and absence of analytic structure in the boundary of the domain.  See \cite{StraubeBook}.\\
   
 As shown in \cite{dbaressential}, the existence of analytic structure in the boundary of bounded convex domains is an impediment to the compactness of the $\overline{\partial}$-Neumann operator.  It is therefore natural to ask whether the Hankel operator with symbol continuous up to the closure of the domain can be compact if the $\overline{\partial} $-Neumann operator is not compact.  As we shall see, the answer is yes.  On the polydisk in $\mathbb{C}^n$, \cite{Le10} showed that the answer to this question is yes, despite the non-compactness of the $\overline{\partial}$-Neumann operator.  For bounded convex domains in $\mathbb{C}^n$ for $n\geq 2$, relating the compactness of the Hankel operator with continuously differentiable symbols to the geometry of the boundary is well studied.  See \cite{CuckovicSahutoglu09}.  They give a more general characterization than \cite{Le10} for symbols that are $C^1$-smooth up to the closure of the domain.  For symbols that are only continuous up to the closure of bounded convex Reinhardt domains in $\mathbb{C}^2$, there is a complete characterization in \cite{ClosSahut}. \\     
    
 \section{The Main Result} 
 
 In this paper we investigate the compactness of Hankel operators on the Bergman spaces of smooth bounded pseudoconvex complete Reinhardt domains.  These domains may not be convex as in \cite{ClosSahut} but are instead almost locally convexifiable.  That is, for any $(p_1,p_2)\in b\Omega$ and if $(p_1,p_2)$ are away from the coordinate axes, then there exists $r>0$ so that 
\[B((p_1,p_2),r):=\{(z_1,z_2)\in \mathbb{C}^2: |z_1-p_1|^2+|z_2-p_2|^2<r^2\} \] and a biholomorphism $T:B((p_1,p_2),r)\rightarrow \mathbb{C}^2$ so that $ B((p_1,p_2),r)\cap \Omega$ is a domain and $T(B((p_1,p_2),r)\cap \Omega)$ is convex.  We will use this fact along with a result in \cite{CuckovicSahutoglu09} to localize the problem.  We then analyze the geometry on analytic structure in the resulting convex domain.  Then we perform the analysis on the boundary of this convex domain using the boundary geometry previously established to show the main result. \\

 \begin{theorem}\label{thmmain}
 Let $\Omega\subset\mathbb{C}^2$ be a bounded pseudoconvex complete Reinhardt domain with a smooth boundary.  Then $\phi\in C(\overline{\Omega})$ so that $\phi\circ f$ is holomorphic for any holomorphic $f:\mathbb{D}\rightarrow b\Omega$ if and only if $H_{\phi}$ is compact on $A^2(\Omega)$.
\end{theorem}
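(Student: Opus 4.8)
The plan is to prove the two implications through one common reduction: localize on $b\Omega$, and at each boundary point pass, off the coordinate axes, to a convex local model via almost local convexifiability. Two features of the Reinhardt structure are used throughout. The rotations $(z_1,z_2)\mapsto(e^{i\theta_1}z_1,e^{i\theta_2}z_2)$ are automorphisms of $\Omega$, so the monomials $z_1^{\alpha_1}z_2^{\alpha_2}$, $\alpha\in\mathbb{Z}_{\geq 0}^2$, form an orthogonal basis of $A^2(\Omega)$ and $P$ is diagonal in it. And a nonconstant holomorphic $f\colon\mathbb{D}\to b\Omega$ cannot have its image inside a coordinate axis, because $b\Omega\cap\{z_j=0\}$ is a circle; hence $f$ meets $\{z_1z_2=0\}$ only on a discrete set, and since $\phi$ is continuous it is enough to verify holomorphicity of $\phi\circ f$ where $f$ avoids the axes. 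Finally, by the localization result of \cite{CuckovicSahutoglu09} it suffices to work near one boundary point at a time: to prove, for each $p\in b\Omega$ and a suitable neighborhood $U_p$, that the Hankel operator localized to $\Omega\cap U_p$ is compact (for the ``if'' direction), and that compactness of $H_\phi$ forces $\phi\circ f$ to be holomorphic along the disks of $b\Omega$ meeting $U_p$ (for the ``only if'' direction).

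Fix $p\in b\Omega$ off the coordinate axes and choose, by almost local convexifiability, $r>0$ and a biholomorphism $T\colon B(p,r)\to\mathbb{C}^2$ with $\Omega':=T(\Omega\cap B(p,r))$ convex and with $T$ smooth up to $\overline{\Omega\cap B(p,r/2)}$. The first step is to understand the analytic disks of $b\Omega'$ near $T(p)$. Since $\Omega'$ is convex, a nonconstant analytic disk in $b\Omega'$ lies in an affine complex line contained in $b\Omega'$: applying the maximum principle to $\mathrm{Re}\,\ell$ along the disk, where $\ell$ is a supporting functional at one of its points, forces the disk into the supporting hyperplane, and iterating traps it in an affine complex line $L\subset b\Omega'$. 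Transporting back by $T^{-1}$, these affine disks correspond to pieces of the Reinhardt varieties $\{z_1^{\,b}z_2^{\,a}=\lambda\}$ (and of the degenerate families $\{z_1=\mathrm{const}\}$, $\{z_2=\mathrm{const}\}$) lying in $b\Omega$, and the main geometric assertion to establish is that, when $p$ lies on such a variety, these disks assemble into a foliation of a relative neighborhood of $T(p)$ in $b\Omega'$ by analytic disks; when $p$ lies on no disk, $b\Omega$ is of finite type at $p$, the $\overline{\partial}$-Neumann operator is locally compact there, and $H_\phi$ is compact near $p$ regardless of $\phi$. Put $\psi:=\phi\circ T^{-1}\in C(\overline{\Omega'})$. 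Because $T$ is biholomorphic, $T^{-1}$ carries analytic disks of $b\Omega'$ to analytic disks of $b\Omega$, so $\psi\circ g=\phi\circ(T^{-1}\circ g)$ is holomorphic for every analytic disk $g$ of $b\Omega'$; this is the transported hypothesis.

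Next I would work in the foliated piece $F\subset b\Omega'$ in coordinates $(w,t)$, with $w\in\mathbb{D}$ the leaf parameter and $t$ transverse. For each $t$ the map $w\mapsto\psi(w,t)$ is holomorphic, hence smooth, with Cauchy estimates locally uniform in $t$, so mollifying in $t$ alone produces $\psi_\varepsilon\in C^1$ with $\psi_\varepsilon\to\psi$ uniformly and $\psi_\varepsilon$ still holomorphic along every leaf, i.e. along every analytic disk of $b\Omega'$ near $T(p)$. Enlarging $\Omega'$ (unchanged near $T(p)$) to a bounded convex domain $\widehat\Omega$ that is strictly convex away from a neighborhood of $T(p)$, the analytic disks of $b\widehat\Omega$ are exactly the leaves near $T(p)$, and one uses the foliation to extend $\psi_\varepsilon$ to a $C^1$ function on $\overline{\widehat\Omega}$ still holomorphic along each of them. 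By the characterization of \cite{CuckovicSahutoglu09} for $C^1$ symbols on bounded convex domains in $\mathbb{C}^2$, $H_{\psi_\varepsilon}$ is then compact on $A^2(\widehat\Omega)$; since $\|H_{\psi-\psi_\varepsilon}\|\leq\|\psi-\psi_\varepsilon\|_\infty\to0$, $H_\psi$ is compact on $A^2(\widehat\Omega)$, and transporting back through $T$ and the localization gives compactness of the Hankel operator localized to $\Omega\cap U_p$. Boundary points on the coordinate axes are treated separately and more simply: a disk of $b\Omega$ through such a point forces $\Omega$ to be locally a cylinder $\{|z_j|<\mathrm{const}\}$, hence locally convex, so the same scheme applies, while at axis points lying on no disk $b\Omega$ is again of finite type. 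Covering $b\Omega$ by finitely many $U_p$ and invoking the localization result yields the ``if'' direction, and for ``only if'' the same reduction gives compactness of $H_\psi$ on $\widehat\Omega$, whence by the equivalence in \cite{CuckovicSahutoglu09} $\psi$ is holomorphic along every analytic disk of $b\widehat\Omega$, hence along every leaf; transporting back, $\phi\circ f$ is holomorphic for every analytic disk $f$ of $b\Omega$ avoiding the axes, and then everywhere by continuity.

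I expect the main obstacle to be the foliation assertion of the second paragraph: proving that the affine disks appearing in the convexified local model really do assemble into a foliation of a piece of the boundary, by tracing them back to the explicit Reinhardt varieties $\{z_1^{\,b}z_2^{\,a}=\lambda\}$ in $b\Omega$ and controlling the distortion induced by the convexifying biholomorphism $T$. This foliation is also what makes the reduction from $C^1$ to merely continuous symbols go through, since it is precisely what lets one mollify $\psi$ transversally to the disks without destroying leafwise holomorphicity, and it is what allows the auxiliary extension to a global convex $\widehat\Omega$ to be made without introducing disks along which $\psi$ is not holomorphic. Once these geometric facts are secured, the operator-theoretic content is the routine combination of the localization theorem, the bound $\|H_\phi\|\leq\|\phi\|_\infty$, and the convex $C^1$ characterization of \cite{CuckovicSahutoglu09}.
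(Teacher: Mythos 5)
Your overall architecture (localize via \cite{CuckovicSahutoglu09}, convexify off the axes, trap disks in affine lines, approximate the symbol by smooth functions still holomorphic along the disks, finish with the convex $C^1$ characterization) matches the paper's, but there is a genuine gap at the step you yourself flag as the main obstacle: the claim that the analytic disks near $p$ ``assemble into a foliation of a relative neighborhood of $T(p)$ in $b\Omega'$.'' This is false in general, and the hardest case of the theorem is precisely the one your argument cannot reach. For a smooth bounded pseudoconvex complete Reinhardt domain, each nondegenerate disk generates, via the torus action, a continuous family that is three real-dimensional and foliates the boundary \emph{near that disk}; but there may be countably many such pairwise disjoint families $\{\Gamma_j\}$ accumulating at the torus orbit of a single boundary point $p$ (the paper's Lemma \ref{disklim} shows the accumulation set must degenerate to such an orbit, and in logarithmic coordinates these families correspond to a sequence of flat segments in a convex boundary, which can accumulate). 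Near such a $p$ the disk-bearing set is nowhere dense in $b\Omega$: between consecutive families there are no disks at all. Your transversal mollification then breaks down: $\psi_\varepsilon(w,t_0)=\int\psi(w,t_0-s)\chi_\varepsilon(s)\,ds$ at a leaf parameter $t_0$ averages $\psi(w,\cdot)$ over nearby transversal parameters, almost none of which index leaves, so there is no reason for $\psi_\varepsilon(\cdot,t_0)$ to be holomorphic; leafwise holomorphicity is exactly what the averaging destroys. The paper handles this case differently: it isolates each family $\Gamma_j$ with a cutoff $\rho_j$, approximates $\phi-\phi(p)$ on $\overline{\Gamma_j}$ by smooth functions holomorphic along $\Gamma_j$ (Proposition \ref{approx}, a genuine two-variable convolution whose leafwise holomorphicity is checked via the directional-derivative criterion of Lemma \ref{directional} and dominated convergence), proves compactness family by family (Proposition \ref{onefamily}), and then controls the infinite sum using the fact that $\phi-\phi(p)\to0$ at the accumulation orbit together with the essential norm estimate of \cite{ClosSahut} (Proposition \ref{enorm}). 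Your proposal contains no substitute for this summation/accumulation argument.

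Two further points. First, your operator bound $\|H_{\psi-\psi_\varepsilon}\|\le\|\psi-\psi_\varepsilon\|_\infty$ needs uniform convergence on all of $\overline{\widehat\Omega}$, whereas the approximants one can actually construct converge uniformly only on the closure of the disk set; this is why the paper must invoke the essential-norm result (uniform convergence on $\Gamma_\Omega$ alone suffices to force $\|H_{\phi_n}\|_e\to0$) rather than the crude sup-norm bound. Second, for the ``only if'' direction the paper simply cites \cite{CCS}; your sketch would require (i) passing from compactness of $H_\phi$ on $A^2(\Omega)$ to compactness of the localized Hankel operator on the convex model, which is the reverse of the localization proposition and is not justified, and (ii) applying the $C^1$ characterization of \cite{CuckovicSahutoglu09} to a symbol that is only continuous. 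Neither step is available as stated.
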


We will assume $\phi\circ f$ is holomorphic for any holomorphic function $f:\mathbb{D}\rightarrow b\Omega$ and show that $H_{\phi}$ is compact on $A^2(\Omega)$, as the converse of this statement appears as a corollary in \cite{CCS}.

\section{Analytic structure in the boundary of pseudoconvex complete Reinhardt domains in $\mathbb{C}^2$}

We fill first investigate the geometry of non-degenerate analytic disks in the boundary of Reinhardt domains.  We define the following collection for any bounded domain $\Omega\subset \mathbb{C}^n$.
\[\Gamma_{\Omega}:=\overline{\bigcup_{f\in A(\mathbb{D})\cap C(\overline{\mathbb{D}})\, , f \,\text{non-constant}}\{f(\mathbb{D}) |f:\mathbb{D}\rightarrow b\Omega\}}\]

Let $\Omega\subset \mathbb{C}^n$ for $n\geq 2$ be a domain.  
We say $\Gamma\subset b\Omega$ is an analytic disk if there exists $F:\mathbb{D}\rightarrow \mathbb{C}^n$ so that every component function of $F$ is holomorphic on $\mathbb{D}$ and continuous up to the boundary of $\mathbb{D}$ and $F(\mathbb{D})=\Gamma$. \\

One observation is for any Reinhardt domain $\Omega\subset \mathbb{C}^n$, if $F(\mathbb{D})\subset b\Omega$ is an analytic disk where $F(\zeta):=(F_1(\zeta), F_2(\zeta),...,F_n(\zeta))$, then for any $(\theta_1,\theta_2,...,\theta_n)\in \mathbb{R}^n$, $G(\mathbb{D})\subset b\Omega$ is also an analytic disk where
\[G(\zeta):=(e^{i\theta_1}F_1(\zeta), e^{i\theta_2}F_2(\zeta),...,e^{i\theta_n}F_n(\zeta)).\]

We say an analytic disk $f(\mathbb{D})$ where $f=(f_1,f_2,...,f_n)$ is trivial or degenerate if $f_j$ is identically constant for all $j\in \{1,2,...,n\}$.  Otherwise, we say an analytic disk is non-trivial or non-degenerate. \\   

Let $\Omega\subset \mathbb{C}^2$ be a bounded pseudoconvex complete Reinhardt domain with a smooth boundary.  If $g(\mathbb{D})\subset b\Omega$ is an analytic disk so that 
$\overline{g(\mathbb{D})}\cap \{z_2=0\}\neq \emptyset$ or $\overline{g(\mathbb{D})}\cap \{z_1=0\}\neq \emptyset$, then $g(\zeta)=(g_1(\zeta),0)$ or $g(\zeta)=(0,g_2(\zeta))$, respectively.

They are possibly infinitely many continuous families of non-trivial analytic disks in the boundary of bounded complete Reinhardt domains $\Omega$ in $\mathbb{C}^2$.  Hence by compactness of the boundary of $\Omega$, there are subsets of $b\Omega$ that are accumulation sets of families of analytic disks.  This next lemma gives us some insight on the structure of these accumulation sets.

\begin{lemma}\label{disklim}
Suppose $\Omega\subset \mathbb{C}^2$ is a bounded complete Reinhardt domain and $\{\Gamma_j\}_{j\in \mathbb{N}}\subset b\Omega$ is a sequence of pairwise disjoint, continuous families of analytic disks so that $\Gamma_j\rightarrow \Gamma_0$ as $j\rightarrow \infty$, where $\Gamma_0=\{e^{i\theta}F(\mathbb{D}):\theta\in [0,2\pi]\}$.  Then, there exists $c_1,c_2\in \mathbb{C}$ so that $F\equiv (c_1,c_2)$.

\end{lemma}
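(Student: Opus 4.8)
The plan is to argue by contradiction: assume $F$ is non-constant and produce a single nonempty \emph{fixed} open subset of $b\Omega$ that is contained in $\Gamma_j$ for all large $j$, contradicting pairwise disjointness. Throughout I write $\Gamma_j=\{e^{i\theta}F_j(\mathbb{D}):\theta\in[0,2\pi]\}$, where ``$\Gamma_j\to\Gamma_0$'' is read as convergence of the generating disks, so $F_j\to F$ locally uniformly. First I would dispose of the coordinate axes. If $\overline{\Gamma_0}$ met $\{z_2=0\}$, then after a rotation $\overline{F(\mathbb{D})}$ would meet it, so by the observation preceding the lemma $F=(F_1,0)$; but the only points of $b\Omega$ lying in $\{z_2=0\}$ form the circle $\{|z_1|=R_1,\ z_2=0\}$, where $R_1$ is the radius of the disk $\Omega\cap\{z_2=0\}$ (here I use that $\Omega$ is a bounded complete Reinhardt domain with smooth boundary), so $|F_1|\equiv R_1$ and $F_1$ is constant, a contradiction; the case $\{z_1=0\}$ is symmetric. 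Hence $\overline{\Gamma_0}$ is at positive distance from both axes, and therefore so is $\Gamma_j$ for all large $j$; in particular all the disks $F$ and $F_j$ ($j$ large) have nonvanishing components.

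Next I would reduce to a single flat face of the logarithmic shadow. Let $\pi(z_1,z_2)=(\log|z_1|,\log|z_2|)$ on $\{z_1z_2\neq 0\}$ and $\omega=\pi(\Omega\cap\{z_1z_2\neq0\})$, which is convex since $\Omega$ is pseudoconvex; because $\Omega$ is complete Reinhardt, $\omega$ is closed under decreasing coordinates, so the frontier $b\omega$ has no face of positive slope. For a non-constant analytic disk $g=(g_1,g_2)\subset b\Omega$ with $g_1,g_2$ nowhere zero, $\pi\circ g$ is harmonic into $b\omega$, and a supporting line $p\,x+q\,y=c$ of $\omega$ at $\pi(g(\zeta_1))$ makes $p\log|g_1|+q\log|g_2|\le c$ on $\mathbb{D}$ with equality at $\zeta_1$, so by the maximum principle $\pi(g(\mathbb{D}))$ lies in one closed line segment (flat face) of $b\omega$. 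Applying this to $F$: since $F$ is non-constant, $\pi(F(\mathbb{D}))$ is a nondegenerate segment inside a flat face $P$; shrinking the domain of $F$ I may assume it lies in the relative interior of $P$. Since $\pi(\Gamma_j)=\pi(F_j(\mathbb{D}))$ (rotations preserve moduli) is connected and Hausdorff-close to $\pi(F(\mathbb{D}))$ for $j$ large, and $b\omega$ coincides with $P$ near the relative interior of $P$, we get $\pi(\Gamma_j)\subset P$ for all large $j$. Writing $P$ on the line $p\,x+q\,y=c$, it follows that $|g_1|^p|g_2|^q\equiv e^{c}$ for every disk $g\subset b\Omega$ over $P$, with the \emph{same} constant $c$ for all of them.

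The crux is the final step, via a monomial change of variables. Since $b\omega$ has no positive-slope face, on a slanted face the outer normal satisfies $p,q>0$; as $g_1,g_2$ are nonvanishing we may choose branches of $\log g_1,\log g_2$ on $\mathbb{D}$, and $\operatorname{Re}(p\log g_1+q\log g_2)\equiv c$ forces $p\log g_1+q\log g_2$ to be constant, i.e.\ $g_2=C\,g_1^{\beta}$ with $\beta=-p/q<0$ and $|C|=e^{c/q}$. Pick integers $a,b$ with $pb-qa=1$ and pass to $(\xi,\eta)=(z_1^p z_2^q,\,z_1^a z_2^b)$, a biholomorphism of $(\mathbb{C}\setminus\{0\})^2$; then each disk over $P$ becomes a slice $\{\xi=C\}\times\eta_g(\mathbb{D})$ with $\eta_g:\mathbb{D}\to\mathbb{C}\setminus\{0\}$ nonvanishing and determining $g(\mathbb{D})$, and the orbit becomes $\Gamma_g=\{(e^{i(p+q)\theta}C,\,e^{i(a+b)\theta}\eta_g(\zeta)):\theta,\zeta\}$. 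Fix $\xi_0:=C_0$, the constant attached to $F$. Since $C_j\to C_0=\xi_0$, for large $j$ the equation $e^{i(p+q)\theta}C_j=\xi_0$ has a solution $\theta_0^{(j)}\to 0$, so the fiber of $\Gamma_j$ over $\xi=\xi_0$ contains $\{\xi_0\}\times e^{i(a+b)\theta_0^{(j)}}\eta_j(\mathbb{D})$, and $e^{i(a+b)\theta_0^{(j)}}\eta_j\to\eta_F$ locally uniformly, where $\eta_F$ is non-constant (otherwise, $\xi$ being constant and $(p,q;a,b)$ invertible would force $F$ constant). By Hurwitz's theorem, uniformly on a compact subdisk, $e^{i(a+b)\theta_0^{(j)}}\eta_j(\mathbb{D})$ contains a fixed open disk $D_0$ with $\overline{D_0}\subset\eta_F(\mathbb{D})$ for all large $j$; pulling back, every such $\Gamma_j$ contains the fixed nonempty open set corresponding to $\{\xi_0\}\times D_0$, contradicting pairwise disjointness. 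Thus $F$ is constant. The horizontal or vertical faces are handled the same way but more easily: there one coordinate of every disk over $P$ is already constant, a single rotation brings the other coordinate onto a fixed circle, and the same Hurwitz argument on the free coordinate applies. The main obstacle is exactly what the monomial substitution overcomes: pinning $(\log|z_1|,\log|z_2|)$ to $P$ controls the moduli but not the arguments, so evaluating naively gives points whose positions still drift with $j$; turning each disk into a $\xi$-slice makes the circle action transitive on the $\xi$-circle, and along the fixed slice $\xi=\xi_0$ the relevant ranges genuinely converge, so Hurwitz yields a common fixed piece. A secondary point needing care is that ``$\Gamma_j\to\Gamma_0$'' must mean convergence of generating disks (mere Hausdorff convergence of the sets is too weak), which is what gives $\eta_j\to\eta_F$ with $\eta_F$ non-constant.
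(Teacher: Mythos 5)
Your route is genuinely different from the paper's. The paper argues measure-theoretically: pairwise disjoint families of non-degenerate disks each carry boundary surface measure bounded below, which is incompatible with accumulation inside a boundary of finite measure, so the limit family must be degenerate. You instead classify all disks lying over a single face of the logarithmic image and show that nearby families must actually overlap. The reduction to a face $p\,x+q\,y=c$ with $p,q>0$, the identity $g_2=C\,g_1^{\beta}$ with $\beta=-p/q$, and the observation that $|C|=e^{c/q}$ is the \emph{same} for every disk over the face are all correct and are the real content of your argument. Two caveats: you invoke pseudoconvexity (log-convexity of $\omega$) and boundary smoothness (for the axis case), neither of which appears among the lemma's stated hypotheses; this is harmless for the paper's applications but makes your proof less general than the statement. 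Also, your reading of $\Gamma_j\to\Gamma_0$ as uniform convergence of generating maps matches what the paper itself extracts in its proof.

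The genuine gap is the step ``pick integers $a,b$ with $pb-qa=1$ and pass to $(\xi,\eta)=(z_1^p z_2^q,\,z_1^a z_2^b)$.'' This presupposes that the outer normal $(p,q)$ of the face can be normalized to a pair of coprime \emph{integers}, i.e.\ that the face has rational slope, and nothing forces that: for any real $p,q>0$ the hypersurface $|z_1|^p|z_2|^q=e^{c}$ is a legitimate piece of the boundary of a smooth bounded pseudoconvex complete Reinhardt domain, foliated by the disks $\zeta\mapsto(e^{h(\zeta)},C e^{-(p/q)h(\zeta)})$ for holomorphic $h$; when $p/q$ is irrational there is no monomial automorphism of $(\mathbb{C}\setminus\{0\})^2$ flattening these to slices $\{\xi=\mathrm{const}\}$, so the proof as written does not cover this case. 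The idea does survive without the monomial map: working directly with $g_2=Cg_1^{\beta}$, the diagonal rotation by $e^{i\theta}$ turns the pair $(g_1,Cg_1^{\beta})$ into $(v,\,e^{i(1-\beta)\theta}Cv^{\beta})$ with $v=e^{i\theta}g_1$, and since $1-\beta=1+p/q\neq 0$ a small rotation $\theta_j\to 0$ aligns $\arg C_j$ with $\arg C_0$ (the moduli already agree); Hurwitz applied to $e^{i\theta_j}F_{j,1}\to F_1$ then yields the fixed common open piece exactly as in your rational case. One must track branches of $g_1^{\beta}$ with some care (for irrational $\beta$ the monodromy $e^{2\pi i\beta\mathbb{Z}}$ is dense in the circle), but on a simply connected subdisk with branches normalized at a base point this goes through. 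So the gap is repairable with your own tools, but as written the irrational-slope faces are not handled.
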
 

\begin{proof}
Let $\sigma$ be the Lebesgue measure on the boundary.   Without loss of generality, we may assume $\Gamma_j$ are families of non-degenerate analytic disks and so we may assume $\sigma(\Gamma_j)>0$ for all $j\in \mathbb{N}$.  If $\sigma(\Gamma_0)>0$, then we consider the sequence of indicator functions on $\Gamma_j$, called $\chi_{\Gamma_j}$.  By assumption, $\chi_{\Gamma_j}\rightarrow \chi_{\Gamma_0}$ pointwise as $j\rightarrow \infty$.  Hence an application of Lebesgue dominated convergence theorem shows that $\sigma(\Gamma_j)\rightarrow \sigma(\Gamma_0)$, and so $\sigma(\Gamma_j)\geq \delta>0$ for sufficiently large $j\in \mathbb{N}$.  Since $\Gamma_j$ are pairwise disjoint and $\Omega$ is bounded, this is a contradiction.  So $\sigma(\Gamma_0)=0$.  Now assume $\Lambda_j(\zeta):=(f_j(\zeta),g_j(\zeta))$ where $f_j$, $g_j$ are holomorphic on $\mathbb{D}$ and continuous up to the boundary of $\mathbb{D}$.  Furthermore, \[\Gamma_j=\{e^{i\theta}\Lambda_j:\theta\in [0,2\pi]\}.\]  Then, there exists $f,g$ so that 
\[\sup\{\text{dist}(((f_j(\zeta),g_j(\zeta)), f(\zeta),g(\zeta))):\zeta\in \overline{\mathbb{D}}\}\rightarrow 0\] as $j\rightarrow \infty$.  Therefore, one can show $f_j\rightarrow f$ and $g_j\rightarrow g$ uniformly on $\overline{\mathbb{D}}$ as $j\rightarrow \infty$.  So $f$ and $g$ are holomorphic on $\mathbb{D}$ and continuous on $\overline{\mathbb{D}}$.  To show $f$ and $g$ are constant it suffices to show they are constant on some open subset of $\mathbb{D}$.  Assume $f$ is not identically constant.  If $g$ is constant then by open mapping theorem, $F(\mathbb{D})$ is open in $\mathbb{C}\times \mathbb{R}$ and also $\sigma(F(\mathbb{D}))=0$, which cannot occur by the open mapping theorem.  So, we assume both $F$ and $g$ are not identically constant, so the zeros of $f'$ and $g'$ have no accumulation point in $\mathbb{D}$.  Then by a holomorphic change of coordinates, there exists an open simply connected set $D\subset \mathbb{D}$ so that $F(D)$ is biholomorphic to a subset $K$ of $\{z_1\in \mathbb{C}\}\times\{0\}$.  Hence again by the open mapping theorem, $f$ is constant on $K$ since $K$ has measure zero and so $f$ is constant on $\overline{\mathbb{D}}$ by the identity principle.

\end{proof}

\begin{lemma}\label{lembiholo}
Let $\Omega\subset \mathbb{C}^2$ be a bounded pseudoconvex complete Reinhardt domain with a 
smooth boundary.  Suppose $f:\mathbb{D}\rightarrow b\Omega$ and $g:\mathbb{D}\rightarrow b\Omega$ are holomorphic functions on $\mathbb{D}$ and continuous on $\overline{\mathbb{D}}$.
Assume that $\overline{f(\mathbb{D})}\cap \overline{g(\mathbb{D})}\neq \emptyset$.  Furthermore, assume $\overline{f(\mathbb{D})}\cap(\{z_1=0\}\cup \{z_2=0\})=\emptyset$ and $\overline{g(\mathbb{D})}\cap(\{z_1=0\}\cup \{z_2=0\})=\emptyset$.  Then, $f(\mathbb{D})$ and $g(\mathbb{D})$ are biholomorphically equivalent to analytic disks contained in a unique complex line.

\end{lemma}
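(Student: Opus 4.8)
The plan is to transfer the problem to the Reinhardt shadow and recognize the projected disks as harmonic maps into a smooth convex plane curve. Write $\Omega=\{(z_1,z_2):(|z_1|,|z_2|)\in G\}$ for the complete Reinhardt shadow $G\subset\mathbb{R}^2_{\geq0}$, and let $\gamma_0=bG\cap\{r_1>0,\ r_2>0\}$. Since $b\Omega$ is smooth and the relevant points are away from the coordinate axes, $\gamma_0$ is a smooth embedded curve; and since $\Omega$ is pseudoconvex and complete, the set $\{(\log|z_1|,\log|z_2|):(z_1,z_2)\in\Omega,\ z_1z_2\neq0\}$ is convex, so $\gamma:=\{(\log r_1,\log r_2):(r_1,r_2)\in\gamma_0\}$ is a smooth, convex, weakly decreasing curve. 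Because $\overline{f(\mathbb{D})}$ and $\overline{g(\mathbb{D})}$ avoid the axes, the components $f_1,f_2,g_1,g_2$ are zero-free on $\overline{\mathbb{D}}$; hence $u:=\log|f_1|$ and $v:=\log|f_2|$ are harmonic on $\mathbb{D}$, continuous on $\overline{\mathbb{D}}$, with $(u,v)(\overline{\mathbb{D}})\subset\gamma$, and similarly for $g$. Fix $(q_1,q_2)\in\overline{f(\mathbb{D})}\cap\overline{g(\mathbb{D})}$, so that $p:=(\log|q_1|,\log|q_2|)$ lies on both projected arcs.

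The crux is to show that each projected arc is a line segment. If $u$ or $v$ is constant then the corresponding component of $f$ has constant modulus, hence is constant, and $f(\mathbb{D})$ already lies in a coordinate complex line; so assume $u,v$ are both nonconstant. Near a point of $(u,v)(\mathbb{D})$, write $\gamma$ as a graph $v=\phi(u)$ with $\phi$ smooth and convex. Then $\phi\circ u=v$ is harmonic, and since $\Delta(\phi\circ u)=\phi''(u)\,|\nabla u|^2$ we obtain $\phi''(u(\zeta))\,|\nabla u(\zeta)|^2\equiv0$ on $\mathbb{D}$. As $u$ is harmonic and nonconstant, $\partial u/\partial z$ is holomorphic and not identically zero, so $\{\nabla u=0\}$ is discrete; hence $\phi''\equiv0$ on the interval $u(\mathbb{D})$, i.e. $\phi$ is affine there, so $(u,v)(\mathbb{D})$ is a line segment $L_f\subset\gamma$. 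The same argument produces a segment $L_g\subset\gamma$, and $p\in\overline{L_f}\cap\overline{L_g}$.

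Next I would deduce that $L_f$ and $L_g$ lie on a common line $\ell_0:\ \alpha u+\beta v=c$ with $\alpha,\beta\geq0$ (outward normal of the convex region). Since $\gamma$ is smooth and contains each closed segment $\overline{L_f},\overline{L_g}$, its tangent direction at $p$ must agree both with the direction of $L_f$ and with the direction of $L_g$ (approach $p$ along each segment, on which $\gamma$ is straight, and use continuity of the tangent line); as both lines pass through $p$, they coincide. Consequently $\alpha\log|f_1|+\beta\log|f_2|\equiv c$ on $\mathbb{D}$; choosing holomorphic branches of $\log f_1,\log f_2$ on the simply connected set $\mathbb{D}$, the holomorphic function $\alpha\log f_1+\beta\log f_2$ has constant real part and is therefore constant, and the same identity (with the same $\alpha,\beta,c$) holds for $g$.

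Finally I would straighten. If $\alpha=0$ or $\beta=0$ the disks lie in coordinate complex lines, and since they share $(q_1,q_2)$ these lines coincide. Otherwise the slope $-\alpha/\beta$ is negative. In the rational case, clearing denominators turns the previous identity into $f_1^{p}f_2^{q}\equiv w_f$ and $g_1^{p}g_2^{q}\equiv w_g$ for fixed positive integers $p,q$ and nonzero constants, and since $(q_1,q_2)$ lies on both disks $w_f=w_g=:w_0$; then $\Phi(z_1,z_2):=(z_1,z_1^{p}z_2^{q})$ is a local biholomorphism on $\{z_1z_2\neq0\}\supset\overline{f(\mathbb{D})}\cup\overline{g(\mathbb{D})}$ with $\Phi\circ f=(f_1,w_0)$ and $\Phi\circ g=(g_1,w_0)$, so both disks are carried into the complex line $\{Z_2=w_0\}$, which is uniquely determined by $\alpha,\beta$ and $w_0$. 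The irrational case is identical after replacing $\Phi$ by the logarithmic coordinate map $(z_1,z_2)\mapsto(\log z_1,\log z_2)$ on a suitable simply connected neighborhood of the disks, which sends them into $\{\alpha Z_1+\beta Z_2=\text{const}\}$. The step I expect to be the main obstacle is the harmonic-map argument of the second paragraph — ruling out that a nonconstant harmonic pair traces a strictly convex sub-arc — together with the ancillary bookkeeping that $\gamma$ is genuinely a graph over the whole $u$-interval in play (no vertical sub-arcs, again via the identity principle for $\partial u/\partial z$, and using that a weakly decreasing convex curve carries at most one vertical and one horizontal segment) and the branch/monodromy details in the non-rational straightening.
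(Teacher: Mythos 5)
Your argument is essentially correct, but it takes a genuinely different route from the paper. The paper localizes at a common point of the two disks, applies local convexifiability of the Reinhardt domain (its Lemma~\ref{almost}) to map a neighborhood biholomorphically onto a convex domain, restricts $f$ and $g$ to the preimage of that neighborhood (reparametrized via the Riemann mapping theorem), invokes the Cu\v{c}kovi\'c--\c{S}ahuto\u{g}lu result that analytic disks in the boundary of a bounded convex domain lie in complex lines, and then asserts that the two lines coincide because the boundary normals vary smoothly. You instead work with the logarithmic Reinhardt shadow: log-convexity of pseudoconvex complete Reinhardt domains makes $\gamma$ a smooth convex curve, the identity $\Delta(\phi\circ u)=\phi''(u)\lvert\nabla u\rvert^2$ forces the harmonic image $(\log|f_1|,\log|f_2|)(\mathbb{D})$ to be a straight segment of $\gamma$, the common point plus $C^1$-smoothness of $\gamma$ pins both segments to one supporting line, and an explicit monomial (or logarithmic) change of coordinates straightens the disks. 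Your approach is more self-contained and yields strictly more information --- it exhibits the disks as lying in the variety $\{z_1^{p}z_2^{q}=w_0\}$ (or its irrational analogue), which is the standard structure theory for disks in Reinhardt boundaries --- whereas the paper's proof is shorter but leans on unproved assertions (e.g.\ that the set $A$ is simply connected, and the ``normal vectors vary smoothly'' step identifying the two lines). The loose ends you flag yourself are real but minor and repairable: the graph representation of $\gamma$ over the full $u$-interval (handled by swapping the roles of $u$ and $v$ where the tangent is vertical and using connectedness of a locally straight subset of a convex curve), the branch/monodromy of the straightening map in the irrational or winding case (where, as in the paper, only a local biholomorphic equivalence is actually produced --- which is all the lemma's conclusion can reasonably mean), and the degenerate case where $f$ or $g$ is constant.
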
  

 \begin{proof}
 Let $\zeta_0\in \mathbb{D}$ and $\zeta_1\in \mathbb{D}$ be such that 
 $f(\zeta_0)=g(\zeta_1)$.  Without loss of generality, by composing with a biholomorphism of the unit disk that sends $\zeta_0$ to $\zeta_1$, we may assume $f(\zeta_0)=g(\zeta_0)$.  Then, there exists $r>0$ and a biholomorphism $T:B(f(\zeta_0),r)\rightarrow \mathbb{C}^2$ so that $f^{-1}(B(f(\zeta_0),r)\cap f(\mathbb{D}))\subset \mathbb{D}$ and $g^{-1}(B(f(\zeta_0),r)\cap g(\mathbb{D}))\subset \mathbb{D}$ and $T(B(f(\zeta_0),r)\cap \Omega)$ is convex.  Then, $A:=f^{-1}(B(f(\zeta_0),r)\cap f(\mathbb{D}))\cap g^{-1}(B(f(\zeta_0),r)\cap g(\mathbb{D}))$ is an open, non-empty, simply connected, and bounded.  By the Riemann mapping theorem, there exists a biholomorphism $R:\mathbb{D}\rightarrow A$.  Then, $T\circ f\circ R$ and $T\circ g\circ R$ are analytic disks in the boundary of a bounded convex domain.  Hence they are contained in a complex line by \cite[Lemma 2]{CuckovicSahutoglu09}.  In fact, they are contained in the same complex line because both disks have closures with non-empty intersection and the domain has a smooth boundary.  That is, if $L_{\alpha}:=\{(a_1\zeta+b_{\alpha},c_1\zeta+d_{\alpha}):\zeta\in \mathbb{C}\}$ and 
$L_{\beta}:=\{(a_2\zeta+b_{\beta},c_2\zeta+d_{\beta}):\zeta\in \mathbb{C}\}$ are one parameter continuous (continuously depending on the parameter) families of complex lines depending on parameters $\alpha$ and $\beta$ that locally foliate the boundary, with $(L_{\alpha_0}\cap L_{\beta_0})\cap b\Omega\neq \emptyset$, then $a_1=a_2$.  The argument uses the fact that boundary normal vectors must vary smoothly.  Furthermore, one can conclude $L_{\alpha_0}=L_{\beta_0}$ since one can show $b_{\alpha_0}=b_{\beta_0}$ and $d_{\alpha_0}=d_{\beta_0}$.
        
\end{proof}

\begin{proposition}\label{propconvex}
Let $\Omega\subset \mathbb{C}^2$ be a smooth bounded convex domain.  Let $\{\Gamma_j\}_{j\in \mathbb{N}}$ be a collection of analytic disks in $b\Omega$ so that \[\nabla:=\overline{\bigcup_{j\in \mathbb{N}}\Gamma_j}\] is connected.  Then there exists a convex set $S$ and a non-constant holomorphic function $F:\mathbb{D}\rightarrow b\Omega$ so that $F$ is continuous up to $\overline{\mathbb{D}}$, $F(\mathbb{D})=S$ and $\nabla\subset \overline{S}$.  
\end{proposition}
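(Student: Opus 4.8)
The plan is to build the maximal analytic disk $S$ by taking a union of complex lines (or line segments) coming from the disks $\Gamma_j$, using the key fact from \cite[Lemma 2]{CuckovicSahutoglu09} that every analytic disk in the boundary of a smooth bounded convex domain lies in a complex line, and using the argument at the end of the proof of Lemma \ref{lembiholo} that two such disks whose closures meet must lie in the \emph{same} complex affine line. First I would record that each $\Gamma_j$ is (after biholomorphic reparametrization of $\mathbb D$) an affine disk $F_j(\mathbb D)\subset L_j\cap b\Omega$ for a unique complex line $L_j$. If $\nabla$ contains just one such affine disk up to boundary we are done by enlarging within $L\cap b\Omega$, so the content is in the case where infinitely many $L_j$ occur. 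The connectedness hypothesis on $\nabla$ forces, by the ``same line'' conclusion of Lemma \ref{lembiholo}, a rigidity: whenever $\overline{\Gamma_i}\cap\overline{\Gamma_j}\neq\emptyset$ the lines coincide, so distinct lines $L_j$ can only meet $\nabla$ through disks with disjoint closures, and then connectedness is threaded through a nested/chained structure.

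Next I would set $\Sigma := \bigl(\bigcup_j L_j\bigr)\cap b\Omega$ and let $S$ be the connected component of $\Sigma^\circ$ (interior relative to $b\Omega$, or relative to the affine $2$-plane it spans) containing the $\Gamma_j$; the goal is to show $S$ is a single affine analytic disk. The mechanism is: convexity forces $L_j\cap b\Omega$ to be a convex subset of the line $L_j$ (the intersection of a convex body's boundary with a line it meets non-transversally is a convex segment or the whole line), and the smoothness of $b\Omega$ plus the ``boundary normals vary smoothly'' argument from Lemma \ref{lembiholo} forces all the $L_j$ touched through $\nabla$ to be parallel and in fact to coincide pairwise whenever their pieces in $b\Omega$ are linked through the connected set $\nabla$. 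Concretely, I would argue that $a_1$ (the common direction vector) is locally constant on $\nabla$, hence constant by connectedness, and then that the ``intercepts'' $b_\alpha,d_\alpha$ are also forced to agree wherever pieces overlap, so in fact all the $L_j$ that contribute to $\nabla$ are the \emph{same} line $L$. Then $\nabla\subset L\cap b\Omega =: \overline S$ with $S$ the relative interior of that convex segment/region, and $S$ is parametrized by a single affine (hence non-constant, holomorphic, continuous up to $\overline{\mathbb D}$) map $F:\mathbb D\to b\Omega$.

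For the parametrization and convexity of $S$: $L\cap b\Omega$ is a convex subset of the complex line $L\cong\mathbb C$, it is closed and (being in a bounded domain) bounded, with non-empty interior in $L$ since it contains a non-degenerate affine disk; so its interior $S$ is an open bounded convex subset of $\mathbb C$, and any Riemann map $F:\mathbb D\to S$ is holomorphic, continuous on $\overline{\mathbb D}$ by Carathéodory (the boundary of a bounded convex planar domain is a Jordan curve), non-constant, and has $F(\mathbb D)=S\subset b\Omega$ with $\nabla\subset\overline S$. This gives the claimed $S$ and $F$.

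The main obstacle I expect is the ``all contributing lines coincide'' step: upgrading the pairwise statement of Lemma \ref{lembiholo} (closures meet $\Rightarrow$ same line) to a global statement along the connected set $\nabla$, since $\nabla$ is only assumed connected, not locally connected or path connected, and the disks $\Gamma_j$ need not chain up through overlapping closures in an obvious way. Handling this cleanly likely requires showing that the map assigning to a point of $\nabla$ the (direction of the) unique supporting complex line through it is well defined and locally constant — using smoothness of $b\Omega$ so that the complex tangent line at each point of $\nabla$ is forced to be $L_j$ whenever that point lies on some $\Gamma_j$ — and then invoking connectedness of $\nabla$. A secondary technical point is ensuring $L\cap b\Omega$ genuinely has interior (so $S\neq\emptyset$) and is convex as a subset of $L$; this follows from convexity of $\Omega$ together with the fact that $L$ meets $b\Omega$ along a set containing a non-degenerate disk, but it should be stated carefully.
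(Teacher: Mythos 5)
Your proposal is correct and follows essentially the same route as the paper: reduce via Lemma \ref{lembiholo} (and connectedness of $\nabla$) to a single complex line $L$ with $L\cap\Omega=\emptyset$, take for $S$ the interior of a convex planar set containing $\nabla$, and parametrize it by a Riemann map extending continuously to $\overline{\mathbb D}$. The only cosmetic difference is that the paper uses the interior of the convex hull $\mathcal H(\nabla)$ where you use the interior of $L\cap b\Omega$, and you justify the boundary extension by Carath\'eodory rather than by smoothness of $b\Omega$; both choices work, and your explicit attention to upgrading the pairwise ``same line'' statement to a global one along the connected set $\nabla$ addresses a point the paper leaves implicit.
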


\begin{proof}
By Lemma \ref{lembiholo}, there exists a complex line $L=\mathbb{C}\times \{0\}$ so that $\nabla\subset L$ and by convexity of the domain, $L\cap \Omega=\emptyset$.  Then the convex hull of $\nabla$, called $\mathcal{H}(\nabla)$, is contained in $L\cap\overline{\Omega}$.  Since $\nabla$ contains a non-trivial analytic disk, the interior of $\mathcal{H}(\nabla)$ is non-empty.  We denote this non-empty interior as $I$.  Assume $\overline{I}\neq \mathcal{H}(\nabla)$.  Let $z_0\in \mathcal{H}(\nabla)\setminus \overline{I}$.  Then there is a positive Euclidean distance from $z_0 $ to $\overline{I}$.  Let $\mathcal{L}$ denote the collection of all line segments from $z_0$ to $bI$, called $K$.  Then $K$ has non-empty interior, which contradicts the convexity of $\mathcal{H}(\nabla)$.  Therefore, $I$ is a non-empty simply connected bounded open set in $\mathbb{C}$, so there is biholomorphism from $\mathbb{D}$ to $I$ that extends continuously to $\overline{\mathbb{D}}$ by smoothness of the boundary of $\Omega$.

\end{proof}

Then Lemma \ref{lembiholo} implies that any disk in the boundary of a bounded pseudoconvex complete Reinhardt domain in $\Omega\subset \mathbb{C}^2$ is contained in a continuous family of analytic disks, called $\Gamma$.  Furthermore, this continuous family can be represented as \[\Gamma=\{(e^{i\theta}F_1(\zeta), e^{i\theta}F_2(\zeta)):\theta\in [0,2\pi]\land \zeta\in \mathbb{D}\}\] since $b\Omega$ is three (real) dimensional and $\Gamma$ locally foliates $b\Omega$.

\section{Locally Convexifiable Reinhardt domains in $\mathbb{C}^2$}

\begin{lemma}\label{almost}
Let $\Omega\subset\mathbb{C}^n$ be a bounded pseudoconvex complete Reinhardt domain.  Then,
$\Omega$ is almost locally convexifiable.  That is, for every $(p_1,p_2,...,p_n)\in b\Omega\setminus(\{z_1=0\}\cup \{z_2=0\}\cup...\cup\{z_n=0\})$ there exists $r>0$ and there exists a biholomorphism $L$ on $B((p_1,p_2,...,p_n),r)$ so that $L(B((p_1,p_2,...,p_n),r)\cap \Omega)$ is convex.

\end{lemma}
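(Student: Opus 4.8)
The plan is to reduce the statement to the classical fact that a log-convex Reinhardt domain becomes convex under the multi-logarithm map, and then to convexify the image domain by an affine (or local biholomorphic) change of coordinates near the boundary point. Since $\Omega$ is a bounded pseudoconvex complete Reinhardt domain, for a boundary point $p=(p_1,\dots,p_n)$ with all $p_j\neq 0$ we may work in the coordinate chart $w_j=\log z_j$ (choosing branches so that $w_j^0:=\log p_j$ is well defined in a neighborhood of $p$), and in these coordinates the image of $\Omega$ is (the preimage of) a convex tube domain in the variables $(\operatorname{Re} w_1,\dots,\operatorname{Re} w_n)$: this is the standard characterization of pseudoconvexity for Reinhardt domains. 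The map $z\mapsto (\log z_1,\dots,\log z_n)$ is biholomorphic on a small ball $B(p,r)$ provided $r$ is small enough that $B(p,r)$ avoids all the coordinate hyperplanes $\{z_j=0\}$ — which is possible precisely because $p$ lies off those hyperplanes.

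First I would fix $r_0>0$ so small that $\overline{B(p,r_0)}\cap\{z_j=0\}=\emptyset$ for every $j$, and pick holomorphic branches $\ell_j$ of the logarithm on $B(p,r_0)$; set $\Phi=(\ell_1,\dots,\ell_n)$, a biholomorphism onto its image $U:=\Phi(B(p,r_0))$. Next I would invoke the log-convexity characterization: the ``logarithmic image'' of a bounded pseudoconvex Reinhardt domain is convex, so $\Phi(B(p,r_0)\cap\Omega)$ is the intersection of $U$ with a convex set $\mathcal{C}$ in $\mathbb{C}^n$ (a tube over a convex body in $\operatorname{Re}$-space). Then I would shrink: choose $r\le r_0$ small enough that $U':=\Phi(B(p,r))$ is itself convex (a ball-image is convex for small radius since $\Phi$ is a local diffeomorphism, or one can just take $B(p,r)$ with $r$ small and note $\Phi$ is close to affine there). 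Because the intersection of two convex sets is convex, $L:=\Phi$ restricted to $B(p,r)$ gives $L(B(p,r)\cap\Omega)=U'\cap\mathcal{C}$ convex, which is exactly the required conclusion.

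The one point that needs a little care — and the place I expect the only real friction — is the convexity of $U'=\Phi(B(p,r))$: a round ball need not map to a convex set under a biholomorphism in general. I would handle this by exploiting that $\Phi$ is holomorphic hence its real Jacobian at $p$ is an (invertible) real-linear map $A$, so $\Phi(z)=\Phi(p)+A(z-p)+o(|z-p|)$; replacing the round ball by a small enough ellipsoid adapted to $A^{-1}$, or simply re-centering and composing with the affine map $A^{-1}$, makes the image convex. Alternatively, since the statement only asks for \emph{some} neighborhood basis element and \emph{some} biholomorphism, I can compose $\Phi$ with the affine map $\zeta\mapsto A^{-1}(\zeta-\Phi(p))$ so that $L=A^{-1}\circ(\Phi-\Phi(p))$ has differential the identity at $p$; then for $r$ small $L(B(p,r))$ is convex (it is $C^1$-close to the round ball $B(0,r)$), and $L(B(p,r)\cap\Omega)$ is the intersection of this convex set with the affine image of the tube $\mathcal{C}$, hence convex. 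Everything else is routine: boundedness of $\Omega$ guarantees the logarithmic image is a genuine bounded-in-$\operatorname{Re}$ convex body, and the hypothesis $p\notin\bigcup_j\{z_j=0\}$ is used exactly once, to make the logarithm chart available.
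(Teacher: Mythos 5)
The paper states Lemma \ref{almost} without giving any proof, so there is nothing to compare your argument against line by line; your route---pass to the branch-of-logarithm chart $\Phi=(\ell_1,\dots,\ell_n)$ on a ball avoiding the coordinate hyperplanes, use that a pseudoconvex complete Reinhardt domain is logarithmically convex so that $\Phi(B(p,r_0)\cap\Omega)=\Phi(B(p,r_0))\cap\mathcal{C}$ for a convex tube $\mathcal{C}$, and then normalize so the image of the small ball is itself convex---is the standard argument and is surely the one intended. Your reduction is correct: completeness of the Reinhardt domain guarantees membership in $\Omega$ depends only on the moduli $(|z_1|,\dots,|z_n|)$, so the identity $\Phi(B\cap\Omega)=\Phi(B)\cap\mathcal{C}$ holds in both directions, and the affine correction $L=A^{-1}\circ(\Phi-\Phi(p))$ with $A=D\Phi(p)$ (which is $\mathbb{C}$-linear, here just $\mathrm{diag}(1/p_1,\dots,1/p_n)$, so $L$ is still a biholomorphism) is the right fix for the fact that a biholomorphic image of a round ball need not be convex.

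The one step you should state more carefully is the convexity of $L(B(p,r))$ for small $r$: being $C^1$-close to a round ball does \emph{not} imply convexity, since $C^1$-small perturbations can carry arbitrarily large oscillations of curvature. What saves you is that $L$ is holomorphic, hence $C^2$ with locally bounded second derivatives, so the rescaled maps $x\mapsto \tfrac{1}{r}\bigl(L(p+rx)-L(p)\bigr)$ converge to the identity in $C^2$ on the closed unit ball as $r\to 0$; since the unit sphere is \emph{strongly} convex (positive definite second fundamental form), a $C^2$-small perturbation of it bounds a convex set. With that adjustment the proof is complete, and the hypothesis $p\notin\bigcup_j\{z_j=0\}$ is indeed used exactly where you say it is.
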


Our understanding of analytic structure in the boundary of bounded convex domains is a crucial part of the proof the Theorem \ref{thmmain}.  The following proposition is proven in \cite{CuckovicSahutoglu09}.

\begin{proposition}[\cite{CuckovicSahutoglu09}]\label{prophull}
Let $\Omega\subset \mathbb{C}^n$ be a bounded convex domain.  Let $F:\overline{\mathbb{D}}\rightarrow b\Omega$ be a non-constant holomorphic map.  Then the convex hull of $F(\mathbb{D})$ is an affine analytic variety.
\end{proposition}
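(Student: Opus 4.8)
The plan is to show that the convex hull $\mathcal{H}(F(\mathbb{D}))$ is a relatively open subset of a complex affine subspace $V\subset\mathbb{C}^n$ that is disjoint from $\Omega$; this is what an affine analytic variety in $b\Omega$ should mean. Convexity of $\Omega$ will be used only to locate $V$, while the "relatively open" conclusion is essentially a statement about holomorphic images and the maximum principle.

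First I would produce a supporting complex hyperplane. Fix $\zeta_0\in\mathbb{D}$ and put $p=F(\zeta_0)\in b\Omega$. By convexity there is a real supporting hyperplane at $p$, say $\operatorname{Re}\langle z-p,\nu\rangle\le 0$ for all $z\in\overline{\Omega}$ with equality at $p$. Then $h(\zeta):=\langle F(\zeta)-p,\nu\rangle$ is holomorphic on $\mathbb{D}$, continuous on $\overline{\mathbb{D}}$, satisfies $\operatorname{Re}h\le 0$, and $\operatorname{Re}h(\zeta_0)=0$; the maximum principle for harmonic functions forces $\operatorname{Re}h\equiv 0$, hence $h\equiv 0$. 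Thus $F(\mathbb{D})$, and so $\mathcal{H}(F(\mathbb{D}))$, lies in the complex hyperplane $H=\{z:\langle z-p,\nu\rangle=0\}$, and $H\cap\Omega=\emptyset$ since $H$ sits inside the supporting real hyperplane. Let $V$ be the smallest complex affine subspace containing $F(\mathbb{D})$; then $V\subseteq H$, so $V\cap\Omega=\emptyset$ and $\mathcal{H}(F(\mathbb{D}))\subseteq V\cap b\Omega$. I would also record that $V$ is the real affine span of $F(\mathbb{D})$ as well: if $F(\mathbb{D})$ lay in a real hyperplane of $V$, that hyperplane has the form $\{z\in V:\operatorname{Re}\lambda(z)=c\}$ for a nonconstant complex-affine functional $\lambda$ on $V$, and then $\lambda\circ F$ has constant real part, hence is constant, placing $F(\mathbb{D})$ in a proper complex-affine subspace of $V$ and contradicting minimality of $V$.

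The main step is to show $\mathcal{H}(F(\mathbb{D}))$ contains none of its relative boundary points, i.e.\ it is relatively open in $V$. Suppose $q\in\mathcal{H}(F(\mathbb{D}))$ were such a point. Since $\operatorname{aff}(\mathcal{H}(F(\mathbb{D})))=V$, there is a closed half-space $\{z\in V:\operatorname{Re}\mu(z)\le a\}$ containing $\mathcal{H}(F(\mathbb{D}))$ with $\operatorname{Re}\mu(q)=a$ and $\mathcal{H}(F(\mathbb{D}))\not\subseteq\{\operatorname{Re}\mu=a\}$ (a proper hyperplane, as $V$ is the affine hull). Write $q=\sum_i t_i F(\zeta_i)$ with $\zeta_i\in\mathbb{D}$, $t_i>0$, $\sum_i t_i=1$. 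From $\operatorname{Re}\mu(F(\zeta_i))\le a$ and $\sum_i t_i\operatorname{Re}\mu(F(\zeta_i))=a$ we get $\operatorname{Re}\mu(F(\zeta_i))=a$ for each $i$, so the harmonic function $\operatorname{Re}(\mu\circ F-a)\le 0$ attains its maximum value $0$ at the interior points $\zeta_i\in\mathbb{D}$ and therefore vanishes identically. Hence $\mu\circ F$ is constant, $F(\mathbb{D})\subseteq\{\operatorname{Re}\mu=a\}$, and so $\mathcal{H}(F(\mathbb{D}))\subseteq\{\operatorname{Re}\mu=a\}$, a contradiction. Consequently $\mathcal{H}(F(\mathbb{D}))=\operatorname{relint}\mathcal{H}(F(\mathbb{D}))$ is open in $V$, and together with $V\cap\Omega=\emptyset$ this exhibits $\mathcal{H}(F(\mathbb{D}))$ as an affine analytic variety lying in $b\Omega$.

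I expect the delicate points to be the upgrade from a real supporting hyperplane of $\Omega$ to a complex one — this is the Hopf-lemma-flavored step where holomorphy genuinely enters — and the bookkeeping between the open disk and its closure, so that the points $\zeta_i$ witnessing $q$ are honestly interior to $\mathbb{D}$ and the maximum principle applies. The extension-to-$\overline{\mathbb{D}}$ hypothesis on $F$ is what makes $\mathcal{H}(F(\mathbb{D}))$ bounded and its supporting functionals well behaved, but the core mechanism is that $\operatorname{Re}(\mu\circ F)$ cannot have an interior maximum without being constant.
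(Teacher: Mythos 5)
The paper offers no proof of this proposition --- it is imported verbatim from \cite{CuckovicSahutoglu09} --- so there is nothing internal to compare against; your argument is, as far as I can tell, essentially the standard proof from that reference. Both of your key steps are sound: the upgrade from a real supporting hyperplane at $F(\zeta_0)$ to a complex hyperplane containing $F(\mathbb{D})$ via the maximum principle applied to $\operatorname{Re}\langle F(\zeta)-p,\nu\rangle$, and the Carath\'eodory-plus-supporting-hyperplane argument showing the hull contains none of its relative boundary points in its complex affine span $V$. The only point worth making explicit is terminological: what you actually prove is that $\mathcal{H}(F(\mathbb{D}))$ is a relatively open convex subset of a complex affine subspace $V$ with $V\cap\Omega=\emptyset$, which is precisely the form in which the proposition is used later in the paper (e.g.\ in Lemma \ref{directional} and Proposition \ref{approx}), so this reading of ``affine analytic variety'' is the right one.
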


We note there are no analytic disks in the boundary of $B((p_1,p_2,...,p_n),r)$ because of convexity and the fact that Property (P) (see \cite{Cat}) is satisfied on the boundary.

We define the following directional derivatives.  We assume $\phi\in C(\overline{\Omega)}$.  Let $\vec{U}=(u_1,u_2)$ be a unit complex tangential vector at $p:=(p_1,p_2)\in b\Omega$.
Then if they exist as pointwise limits, 
\[\partial_b^{\vec{U},p}\phi:=\lim_{t\rightarrow 0}\frac{\phi(p_1+tu_1,p_2+tu_2)-\phi(p_1,p_2)}{t}\]
and
\[\overline{\partial}_b^{\vec{U},p}\phi:=\lim_{t\rightarrow 0}\frac{\phi(\overline{p_1+tu_1},\overline{p_2+tu_2})-\phi(\overline{p_1},\overline{p_2})}{t}\]
The following lemma uses these directional derivatives to characterize when a continuous function $\phi$ is holomorphic 'along' analytic disks in the boundary of the domain.  

\begin{lemma}\label{directional}
Let $\Omega\subset \mathbb{C}^2$ be a bounded pseudoconvex complete Reinhardt domain with a smooth boundary.  Suppose $\phi\in C(\overline{\Omega})$.  Then $\phi\circ g$ is holomorphic for any non-constant holomorphic $g:\mathbb{D}\rightarrow b\Omega$ if and only if for every $p\in g(\mathbb{D})$ and $\vec{U}$ tangent to $b\Omega$ at $p$,
\[\partial_b^{\vec{U},p}\phi\] exists as a pointwise limit and
\[\overline{\partial}_b^{\vec{U},p}\phi=0\]
\end{lemma}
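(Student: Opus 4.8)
The plan is to localize the equivalence near a single point $p=g(\zeta_0)$ of a non-degenerate disk with $g'(\zeta_0)\neq 0$ and $p$ away from the coordinate axes, and then recover the full statement: the critical set $\{\zeta:g'(\zeta)=0\}$ is discrete, so both sides of the equivalence are controlled there by continuity, and if $\overline{g(\mathbb D)}$ meets a coordinate axis the observation preceding Lemma~\ref{disklim} reduces everything to a one-variable disk, where the claim is elementary. Near such a $p$, the discussion after Proposition~\ref{propconvex} shows $b\Omega$ is foliated by the analytic disks of a continuous family $\Gamma=\{(e^{i\theta}F_1(\zeta),e^{i\theta}F_2(\zeta))\}$; moreover a non-degenerate disk through $p$ forces the Reinhardt shadow of $\Omega$ to be logarithmically flat at $p$, so in the variables $s_j=\log|z_j|^2$ the shadow is, near $p$, a straight segment through $(\log|p_1|^2,\log|p_2|^2)$, the complex tangent line $T^{\mathbb C}_p(b\Omega)$ is spanned by $g'(\zeta_0)$, and — writing $\rho(z)=\tilde\rho(|z_1|^2,|z_2|^2)$ for a Reinhardt defining function — this line has the explicit generator $\vec U\parallel\big(\overline{p_2}\,\partial_2\tilde\rho,\,-\overline{p_1}\,\partial_1\tilde\rho\big)$. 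Two observations organize the conjugated ($\overline\partial_b$) condition: the map $z\mapsto(\overline{z_1},\overline{z_2})$ preserves $\overline\Omega$, so $g^{*}(\zeta):=\overline{g(\overline\zeta)}$ is again a non-degenerate analytic disk in $b\Omega$ with $g^{*}(\overline{\zeta_0})=\overline p$ and $(g^{*})'(\overline{\zeta_0})=\overline{\vec U}$; and neither condition in the lemma is affected by replacing $\vec U$ by $e^{i\alpha}\vec U$, so both depend only on the complex tangent line.

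With this setup the two implications are mirror images. Assume $\phi\circ f$ is holomorphic for every non-constant analytic disk $f$ in $b\Omega$; then $\phi\circ g$ and $\phi\circ g^{*}$ are holomorphic, so in particular the difference quotients $\big(\phi(g(\zeta_0+t))-\phi(p)\big)/t$ and the corresponding ones for $g^{*}$ at $\overline p$ converge. The directional derivatives are computed along the straight ray $t\mapsto p+t\vec U$ and its conjugate, and $g(\zeta_0+t)=p+t\,g'(\zeta_0)+O(t^2)$ with $g'(\zeta_0)$ a nonzero multiple of $\vec U$ (and similarly for $g^{*}$), so once the second-order error is absorbed (see below) the ray difference quotients inherit convergence from the leaf difference quotients, which yields that $\partial_b^{\vec U,p}\phi$ exists and $\overline\partial_b^{\vec U,p}\phi=0$. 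Conversely, if the two directional conditions hold at every point of $g(\mathbb D)$, I would use them — after transferring from the ray back to the leaf in the same way — to show $\phi\circ g$ is real-differentiable at each $\zeta_0$ with $\overline\partial(\phi\circ g)(\zeta_0)=0$; together with continuity of $\phi\circ g$ and the removable-singularity theorem across the critical set of $g$, this makes $\phi\circ g$ holomorphic.

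The one genuinely delicate point, common to both directions, is absorbing the second-order discrepancy between the curved leaves $\zeta\mapsto g(\zeta)$, $\zeta\mapsto g^{*}(\zeta)$ — along which the hypothesis "$\phi\circ f$ holomorphic" is available — and the straight rays appearing in the directional derivatives: the two differ by the genuinely non-tangential term $\frac{t^2}{2}g''(\zeta_0)+\cdots$, and for a merely continuous $\phi$ an $O(|t|^2)$ change of argument is not automatically an $o(|t|)$ change of value. The route I would take is to exploit that $\phi$ is holomorphic along the \emph{whole} continuous family $\Gamma$ of leaves near $p$ (and along the conjugate family near $\overline p$), not just along $g$: this furnishes, on a fixed neighborhood of $p$, a bound for $\phi$ along the leaf directions that is Lipschitz and uniform over the leaves, and together with the logarithmic flatness of the shadow curve this is exactly enough to push the leaf-versus-ray discrepancy into the $o(|t|)$ remainder. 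Making this uniform "Lipschitz-along-the-foliation" estimate precise is, I expect, the technical heart of the argument; once it is in hand, the reductions at the critical points of $g$ and along the coordinate axes go through by continuity as above.
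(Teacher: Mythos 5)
Your proposal diverges from the paper's argument in a way that leaves its central difficulty unresolved. The paper does not work in the original coordinates at all: it first applies the local biholomorphism $T$ coming from Lemma~\ref{almost} and Proposition~\ref{prophull} to flatten the disk into an affine disk $T(f(\mathbb{D}))\subset\mathbb{C}\times\{\alpha\}$, chooses $t_0$ so that the straight segment $(p_1+tu,\alpha)$ lies \emph{inside} the flattened disk for $|t|<|t_0|$, and then the difference quotients in the lemma are literally difference quotients of the one-variable function $\widetilde{\phi}\circ g\circ g_1^{-1}$, which is holomorphic by hypothesis; the converse is the chain rule. In that setting the leaf and the ray coincide, so there is no second-order discrepancy to absorb, and the points at which $\phi$ is evaluated are guaranteed to lie in $\overline{\Omega}$ (another point your ray $p+t\vec U$ in ambient coordinates does not obviously satisfy).

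The gap in your version is exactly the step you flag as the ``technical heart'' and then do not carry out: absorbing the $O(|t|^2)$ discrepancy between the curved leaf $g(\zeta_0+t)$ and the straight ray $p+t\vec U$. Your proposed remedy --- a Lipschitz estimate for $\phi$ \emph{along the leaves}, uniform over the family $\Gamma$ --- cannot close it, for the reason you yourself note: the discrepancy vector $\tfrac{t^2}{2}g''(\zeta_0)+\cdots$ is genuinely non-tangential to the leaf, and in general has components transverse to the foliation of $b\Omega$ (and to $b\Omega$ itself). In those transverse directions $\phi$ is only continuous, so a displacement of size $O(|t|^2)$ produces only an $o(1)$ change in value, not the $o(|t|)$ you need for the difference quotient. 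A Cauchy-estimate Lipschitz bound along leaves controls only the leafwise component of the displacement. Without either (i) some regularity of $\phi$ transverse to the foliation, which is not hypothesized, or (ii) the paper's device of straightening the leaf by a biholomorphism so that no transverse displacement ever arises, the implication in both directions is unproven. The conjugate-disk observation $g^{*}(\zeta)=\overline{g(\overline{\zeta})}$ is a genuinely nice way to read the paper's unusual definition of $\overline{\partial}_b^{\vec U,p}\phi$, but it inherits the identical leaf-versus-ray problem at $\overline{p}$, so it does not rescue the argument.
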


\begin{proof}

Suppose $\phi\circ f$ is holomorphic for any $f:\mathbb{D}\rightarrow b\Omega$ holomorphic.  The first case to consider is if $\overline{f(\mathbb{D})}$ intersects either coordinate axis. 

If $\overline{f(\mathbb{D})}$ intersects either coordinate axis, then by smoothness of $b\Omega$, $f(\mathbb{D})$ is contained in an affine analytic variety and is either vertical or horizontal.  That is, $f(\mathbb{D})$ is contained in the biholomorphic image of $\mathbb{D}$.  And so one can show 
$\partial_b^{\vec{U},p}\phi$ exists and $\overline{\partial}_b^{\vec{U},p}\phi=0$.\\

That is, we may assume $f:=(f_1,f_2):\mathbb{D}\rightarrow b\Omega$ is holomorphic and neither $f_1$ nor $f_2$ is identically constant.  This implies $\overline{f(\mathbb{D})}$ is away from either coordinate axis.  Then $f(\mathbb{D})$ is contained in a family of analytic disks $\Gamma$ which foliate the boundary near $f(\mathbb{D})$.  Let $p\in f(\mathbb{D})$.  By Lemma \ref{almost} and Proposition \ref{prophull}, there exists a biholomorphism $T:B(p,r)\rightarrow \mathbb{C}^2$ so that 
$T(f(\mathbb{D}))\subset \mathbb{C}\times \{\alpha\}$ for some $\alpha\in [-1,1]$.  Furthermore, we may assume $T\circ f:=g$
where $g=(g_1,\alpha)$ and $g_1:\mathbb{D}\rightarrow \mathbb{C}$ is a biholomorphism with a continuous extension to the unit circle.  We may assume $g_1$ is a biholomophism by Proposition \ref{propconvex}.

Let $\phi\circ T^{-1}=\widetilde{\phi}$.  We will first show the tangential directional derivative $\partial_b^{\vec{U},p}\widetilde{\phi}$ and the conjugate tangential directional derivative $\overline{\partial}_b^{\vec{U},p}\widetilde{\phi}$ exists on $T(\Gamma)\subset \{(z_1,\alpha): z_1\in \mathbb{C}\land \alpha\in [-1,1]\}$ and $\overline{\partial}_b^{\vec{U},p}\widetilde{\phi}=0$ on $T(\Gamma)$ if and only if $\widetilde{\phi}\circ g$ is holomorphic for any holomorphic $g$ so that 
$g(\mathbb{D})\subset T(\Gamma)$.  First we suppose $\widetilde{\phi}\circ g$ is holomorphic and $g(\mathbb{D})\subset T(\Gamma)$.    Then we consider a unit vector $\vec{U}=(u,0)$ so that $\vec{U}$ is tangent to $g(\mathbb{D})$.  We may consider the restriction of $\phi$ to $\overline{T(\Gamma)}$ to be a function of $(z_1,\overline{z_1},\alpha)$.  That is,
\[\phi|_{\overline{T(\Gamma)}}=\phi(z_1,\overline{z_1},\alpha).\] Then for $({p_1},\alpha)\in g(\mathbb{D})$ we chose $t_0\in \mathbb{R}\setminus\{0\}$ so that for all $t$, $|t_0|>|t|>0$ we have $({p_1+tu},\alpha)\in g(\mathbb{D})$.  Then using the fact that $\widetilde{\phi}\circ g$ is holomorphic, we have

\begin{align*}
&\frac{\widetilde{\phi}(p_1,\overline{p_1+tu},\alpha)-\widetilde{\phi}(p_1,\overline{p_1},\alpha)}{t}\\
=&\frac{\widetilde{\phi}(g_1\circ g_1^{-1}(p_1),\overline{g_1}\circ \overline{g_1^{-1}}(\overline{p_1+tu}),\alpha)-\widetilde{\phi}(g_1\circ g_1^{-1}(p_1),\overline{g_1}\circ \overline{g_1^{-1}}(\overline{p_1}),\alpha)}{t}\\
\rightarrow &\frac{\partial(\phi\circ g\circ g_1^{-1})}{\partial \overline{z_1}}=0\\
\end{align*}
as $t\rightarrow 0$ and at $(p_1,\alpha)\in g(\mathbb{D})$.  By a similar argument, it can be shown that 
\[\partial_b^{\vec{U},p}\widetilde{\phi}:=\lim_{t\rightarrow 0}\frac{\widetilde{\phi}({p_1+tu},\alpha)-\widetilde{\phi}({p_1},\alpha)}{t}\] exists and is finite on $T(\Gamma)$.

Next we assume \[\overline{\partial}_b^{\vec{U},p}\widetilde{\phi}:=\lim_{t\rightarrow 0}\frac{\widetilde{\phi}(\overline{p_1+tu},\alpha)-\widetilde{\phi}(\overline{p_1},\alpha)}{t}=0\] on $T(\Gamma)$
and \[ \partial_b^{\vec{U},p}\widetilde{\phi}:=\lim_{t\rightarrow 0}\frac{\widetilde{\phi}({p_1+tu},\alpha)-\widetilde{\phi}({p_1},\alpha)}{t}\] exists and is finite on $T(\Gamma)$. 

Then 
\[\frac{\partial (\widetilde{\phi}\circ g)(\zeta)}{\partial \overline{\zeta}}=\partial_b^{\vec{U},p}\widetilde{\phi}\frac{\partial g}{\partial\overline{\zeta}}+\overline{\partial}_b^{\vec{U},p}\widetilde{\phi}\frac{\partial \overline{g}}{\partial\overline{\zeta}}=0\]

so by composing $\widetilde{\phi}$ with $T$, we have that $\phi\circ f$ is holomorphic.

\end{proof}

 \begin{proposition}\label{approx}
 Let $\Omega\subset \mathbb{C}^2$ be a bounded pseudoconvex complete Reinhardt domain with a smooth boundary.  Suppose $\phi\in C(\overline{\Omega})$ is such that $\phi\circ f$ is holomorphic for any holomorphic $f:\mathbb{D}\rightarrow b\Omega$.  Let $\Gamma\subset b\Omega$ be a continuous family of non-trivial analytic disks so that $\overline{\Gamma}$ is disjoint from the closure of any other non-trivial family of analytic disks in $b\Omega$.  Then there exists $\{\psi_n\}_{n\in \mathbb{N}}\subset C^{\infty}(\overline{\Omega})$ so that the following holds.
 \begin{enumerate}
 \item $\phi_n\rightarrow \phi$ uniformly on $\overline{\Gamma}$ as $n\rightarrow \infty$.
 \item $\phi_n\circ f$ is holomorphic for any holomorphic $f$ so that $f(\mathbb{D})\subset \Gamma$
 \end{enumerate}
 \end{proposition}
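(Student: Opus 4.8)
The plan is to average $\phi$ over the one-parameter symmetry that sweeps out $\Gamma$, taking advantage of the fact that $\phi$ is \emph{already} holomorphic along the leaves of $\Gamma$. By the description of $\Gamma$ recorded after Lemma~\ref{lembiholo}, write
\[\Gamma=\bigl\{\,\Psi(\theta,\zeta):=\bigl(e^{i\theta}F_1(\zeta),\,e^{i\theta}F_2(\zeta)\bigr)\ :\ \theta\in\mathbb{R}/2\pi\mathbb{Z},\ \zeta\in\mathbb{D}\,\bigr\},\]
with $F=(F_1,F_2)$ holomorphic on $\mathbb{D}$, continuous on $\overline{\mathbb{D}}$, and let $R_s(z_1,z_2):=(e^{is}z_1,e^{is}z_2)$ be the diagonal circle action, so that $R_s$ preserves $\overline{\Omega}$, $\Psi(\theta+s,\zeta)=R_s\Psi(\theta,\zeta)$, and $R_s$ permutes the leaves $\{\theta=\text{const}\}$ of $\Gamma$. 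Put $\Phi:=\phi\circ\Psi$; it is continuous on $(\mathbb{R}/2\pi\mathbb{Z})\times\overline{\mathbb{D}}$, and for each fixed $\theta$ the slice $\zeta\mapsto\Phi(\theta,\zeta)$ is holomorphic on $\mathbb{D}$, being $\phi$ composed with the holomorphic disk $\zeta\mapsto e^{i\theta}F(\zeta)$. Fix a nonnegative $C^\infty$ approximate identity $(\rho_n)_n$ on $\mathbb{R}/2\pi\mathbb{Z}$ and a sequence $r_n\uparrow 1$, and set
\[\Phi_n(\theta,\zeta):=\int_{0}^{2\pi}\Phi(\theta-s,\,r_n\zeta)\,\rho_n(s)\,ds .\]
Since $r_n\zeta\in\mathbb{D}$ for every $\zeta\in\overline{\mathbb{D}}$, $\Phi_n$ is defined on a neighborhood of $(\mathbb{R}/2\pi\mathbb{Z})\times\overline{\mathbb{D}}$; it is holomorphic in $\zeta$ on each slice (an average of holomorphic functions), it is $C^\infty$ in $\theta$, and, bounding $\zeta$-derivatives uniformly in $\theta$ by Cauchy estimates, it is jointly $C^\infty$. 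Finally $\Phi_n\to\Phi$ uniformly on the compact set $(\mathbb{R}/2\pi\mathbb{Z})\times\overline{\mathbb{D}}$, by uniform continuity of $\Phi$ together with $\operatorname{supp}\rho_n\to\{0\}$ and $r_n\to 1$.

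Next I transport $\Phi_n$ back to $b\Omega$ and extend it to $\overline{\Omega}$. The set $\overline{\Gamma}=\Psi\bigl((\mathbb{R}/2\pi\mathbb{Z})\times\overline{\mathbb{D}}\bigr)$ is compact, and, since $\Gamma$ locally foliates the $3$-dimensional boundary, $\Psi$ is a surjective local diffeomorphism from $(\mathbb{R}/2\pi\mathbb{Z})\times\mathbb{D}$ onto $\Gamma$. Using Lemma~\ref{lembiholo} and Proposition~\ref{propconvex}, each leaf closure is biholomorphic, through a coordinate extending to $\partial\mathbb{D}$, to a convex domain with smooth boundary, so the leaf boundaries $e^{i\theta}F(\partial\mathbb{D})$ are smooth curves and $\Psi$ extends to a smooth immersion of a neighborhood of $(\mathbb{R}/2\pi\mathbb{Z})\times\overline{\mathbb{D}}$ with image $\overline{\Gamma}$, a compact smooth submanifold with boundary of $b\Omega$. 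If $\Psi$ is not injective its fibres are finite, being the orbits of a finite group $G$ of symmetries of $(\mathbb{R}/2\pi\mathbb{Z})\times\overline{\mathbb{D}}$ under which $\Phi$ is invariant; replacing $\Phi_n$ by $|G|^{-1}\sum_{g\in G}\Phi_n\circ g$, which is still $C^\infty$, still holomorphic on slices, and still uniformly convergent to $\Phi$, we may assume $\Phi_n$ descends to $\overline{\Gamma}$. Set $\phi_n:=\Phi_n\circ\Psi^{-1}$, a $C^\infty$ function on a neighborhood of $\overline{\Gamma}$ in $b\Omega$; extending it through a collar of $b\Omega$ in $\overline{\Omega}$ and multiplying by a cutoff that equals $1$ near $\overline{\Gamma}$ and is supported in that collar produces $\psi_n\in C^\infty(\overline{\Omega})$ with $\psi_n=\phi_n$ on $\overline{\Gamma}$.

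It remains to verify the two conclusions. Conclusion (1) is immediate, since on $\overline{\Gamma}$ we have $|\psi_n-\phi|=|\Phi_n\circ\Psi^{-1}-\Phi\circ\Psi^{-1}|\le\|\Phi_n-\Phi\|_{L^\infty}\to 0$. For (2), let $f\colon\mathbb{D}\to\Gamma$ be holomorphic and write $\Psi^{-1}\circ f=(\vartheta,\omega)$ locally. Because $\Psi(\theta,\zeta)=e^{i\theta}F(\zeta)$ is holomorphic in $\zeta$, $\Psi_{\overline{\zeta}}\equiv 0$, so $0=\partial_{\overline{\zeta}}\bigl(\Psi\circ(\vartheta,\omega)\bigr)=\Psi_\theta\,\partial_{\overline{\zeta}}\vartheta+\Psi_\zeta\,\partial_{\overline{\zeta}}\omega$ with $\Psi_\theta=i\Psi$ and $\Psi_\zeta=e^{i\theta}F'$; as these two $\mathbb{C}^2$-valued vectors are $\mathbb{C}$-linearly independent on a dense open subset of $\mathbb{D}$ (otherwise the leaf would be contained in a single circle orbit and $\Gamma$ would fail to be $3$-dimensional), and $\vartheta,\omega$ are smooth, we conclude $\partial_{\overline{\zeta}}\vartheta\equiv 0$ and $\partial_{\overline{\zeta}}\omega\equiv 0$. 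Thus $\vartheta$ is holomorphic and real-valued, hence a constant $\theta_0$, and $\omega$ is holomorphic, so $f$ lies in the single leaf $\{\theta=\theta_0\}$ and $\psi_n\circ f=\Phi_n(\theta_0,\omega(\cdot))$ is holomorphic because $\Phi_n(\theta_0,\cdot)$ is. Leaves meeting a coordinate axis are handled as in the first case of the proof of Lemma~\ref{directional}: such a leaf lies in an affine analytic variety parametrized biholomorphically by the disk, and the corresponding mollification is again holomorphic along it; the hypothesis that $\overline{\Gamma}$ is disjoint from the closures of all other non-trivial families ensures no other family interferes with these identifications.

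The step I expect to be the main obstacle is the assertion that $\overline{\Gamma}$ is a genuine compact smooth submanifold with boundary of $b\Omega$, equivalently that the leaf closures have smooth boundary and $\Psi$ extends to a smooth immersion up to $(\mathbb{R}/2\pi\mathbb{Z})\times\partial\mathbb{D}$; once this is in hand the uniform approximation and the holomorphy of $\psi_n\circ f$ are essentially formal, but the smooth extension to $\overline{\Omega}$ and the descent under $G$ rely on it. A priori Proposition~\ref{propconvex} supplies only continuity of the leaf parametrizations up to $\partial\mathbb{D}$, so one must upgrade this to smoothness using the smoothness of $b\Omega$, for instance via boundary regularity of the Riemann maps onto the leaves, the leaves being, after the local convexifying change of coordinates furnished by Lemma~\ref{almost}, the slices of a smoothly bounded convex domain by complex lines. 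Everything else in the argument is routine.
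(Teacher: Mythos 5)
Your construction is genuinely different from the paper's. The paper first applies the local convexification $T$ (Lemma~\ref{almost}, Proposition~\ref{prophull}) and a further holomorphic straightening $H$ so that the family becomes $\mathbb{D}_s\times(-1,1)$ in ambient coordinates $(z_1,\alpha)$, and then mollifies $\phi\circ T^{-1}$ with a product kernel $n^3\chi(nz_1)\widetilde{\chi}(n\alpha)$ (with a dilation $r_n$ to stay inside the family); leafwise holomorphy of the mollification is then checked through the directional-derivative characterization of Lemma~\ref{directional} together with dominated convergence. You instead exploit the Reinhardt circle action: parametrizing $\Gamma$ by $\Psi(\theta,\zeta)=e^{i\theta}F(\zeta)$, convolving only in $\theta$ and dilating in $\zeta$, so that holomorphy along each leaf is automatic (an average of functions holomorphic in $\zeta$). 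That part of your argument is clean and arguably more transparent than the paper's interchange of limit and integral. The trade-off is that the paper's smoothing happens in ambient holomorphic coordinates, so $\psi_n$ is by construction a smooth function of the ambient variables; yours happens in the abstract parameter space $(\mathbb{R}/2\pi\mathbb{Z})\times\overline{\mathbb{D}}$ and must then be pushed forward through $\Psi^{-1}$.

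That push-forward is where the gap is, and you have correctly identified it yourself: to get $\psi_n\in C^{\infty}(\overline{\Omega})$ you need $\Psi$ to be a smooth immersion up to $(\mathbb{R}/2\pi\mathbb{Z})\times\partial\mathbb{D}$ with a well-understood fibre structure, and nothing in the paper's preparatory results supplies this. Proposition~\ref{propconvex} gives only a Riemann map extending \emph{continuously} to $\overline{\mathbb{D}}$, so smoothness of the leaf closures and of $\Psi$ at $\partial\mathbb{D}$ must be proved separately (your suggested route via boundary regularity of Riemann maps onto complex-line slices of a smoothly bounded convex domain is plausible but is a real piece of work, not a remark). Moreover $\Psi$ fails to be an immersion wherever $F_1F_2'-F_2F_1'=0$, so $\Phi_n\circ\Psi^{-1}$ need not be smooth in ambient coordinates near such points, and your treatment of non-injectivity is not justified: the fibres of $\Psi$ are the equivalence classes of points of $F(\mathbb{D})$ related by a diagonal rotation, and there is no argument that these are orbits of a finite group acting on $(\mathbb{R}/2\pi\mathbb{Z})\times\overline{\mathbb{D}}$ under which $\Phi$ is invariant. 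Since the proposition asserts $\psi_n\in C^{\infty}(\overline{\Omega})$, these points cannot be deferred; until they are filled in, the argument only produces smooth functions on the parameter space, not on $\overline{\Omega}$. The remaining steps (uniform convergence on $\overline{\Gamma}$, and the verification that any holomorphic $f:\mathbb{D}\rightarrow\Gamma$ lies in a single leaf, which also quietly uses the immersion property to make $\vartheta$ and $\omega$ smooth) are fine modulo the same issue.
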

 
 \begin{proof}
Let $\nabla\subset b\Omega$ be a non-degenerate analytic disk so that $f(\mathbb{D})=\nabla$ where 
$f=(f_1,f_2)$ is holomorphic and continuous up to $\overline{\mathbb{D}}$.  Furthermore, assume $\nabla$ is away from the coordinate 
axes.  By Lemma \ref{lembiholo} and Proposition \ref{prophull}, there is a local holomorphic change of coordinates $T$ so that $T(\nabla)$ is contained in an affine analytic variety.  By Proposition \ref{propconvex}, we may assume $T(\nabla)$ is convex and $\overline{T(\nabla)}\subset \overline{T(\Gamma)}$ where $\Gamma$ is the continuous family of disks containing $f(\mathbb{D})$ and away from the closure of any other non-degenerate analytic disk.  Then the restriction $\phi|_{\Gamma}=\phi(z_1,\alpha)$ where $z_1\in (T(U\cap b\Omega))\subset\{(z_1,z_2)\in \mathbb{C}^2:z_2=0\}$ and $\alpha\in [-1,1]$.  Without loss of generality, extend $\phi$ as a continuous function on $\mathbb{C}^2$.  As notation, $\mathbb{D}_{\frac{1}{n}}:=\{z\in \mathbb{C}:|z|<\frac{1}{n}\}$.  

We let $\chi\in C^{\infty}_0(\mathbb{D})$ so that $0\leq \chi\leq 1$, $\chi$ is radially symmetric, and 
$\int_{\mathbb{C}}\chi=1$.\\

Similarly, we let $\widetilde{\chi}\in C^{\infty}_0(-1,1)$, $0\leq \widetilde{\chi}\leq 1$, and radially symmetric so that $\int_{\mathbb{R}}\widetilde{\chi}=1$.\\

Then we define the smooth mollifier $\{\chi_n\}_{n\in \mathbb{N}}\subset C^{\infty}_0(\mathbb{D}_{\frac{1}{n}}\times \left(-\frac{1}{n},\frac{1}{n}\right))$ as \[\chi_n(z_1,\alpha):=n^3\chi(nz_1)\widetilde{\chi}(n\alpha).\]

Then, there exists a holomorphic change of coordinates $H:V\rightarrow \mathbb{C}^2$ so that $T(\Gamma)\subset V$ and $H(T(\Gamma))=\mathbb{D}_s\times (-1,1)$ for some fixed radius $s>0$.  For every $n\in \mathbb{N}$, chose $0<r_n<1$ so that \[-1<r_n(\alpha-\beta)<1\] and \[|r_n(z_1-\lambda)|<s\] for every $(z_1,\alpha)\in \mathbb{D}_s\times (-1,1)$ and for all

 \[(\lambda,\beta)\in \mathbb{D}_{\frac{1}{n}}\times \left(-\frac{1}{n},\frac{1}{n}\right).\]

Then we define the convolution of $\phi\circ T^{-1}$ with $\{\chi_n\}$ in the following manner.
\[\psi_n(z_1,\alpha):=\int_{\mathbb{C}\times \mathbb{R}}\phi\circ T^{-1}(r_n(z_1-\lambda),r_n(\alpha-\beta))\chi_n(\lambda,\beta)dA(\lambda)d\beta.\]  

Let us extend $\psi_n$ trivially to $\mathbb{C}^2$ and denote this trivial extension as $\psi_n$, abusing the notation.

Now, we have everything we need to show $\psi_n\circ g$ are holomorphic for $g:\mathbb{D}\rightarrow T(\Gamma)$ holomorphic.

Using Lemma \ref{directional}, for every $n\in \mathbb{N}$,
\[\lim_{t\rightarrow 0}\frac{\phi\circ T^{-1}(r_n(\overline{z_1+tu}-\lambda),r_n(\alpha-\beta))-\phi\circ T^{-1}(r_n(\overline{z_1}-\lambda),r_n(\alpha-\beta))}{t}=0\] 

pointwise and

\[\lim_{t\rightarrow 0}\frac{\phi\circ T^{-1}(r_n({z_1+tu}-\lambda),r_n(\alpha-\beta))-\phi\circ T^{-1}(r_n({z_1}-\lambda),r_n(\alpha-\beta))}{t}\] exists and is finite for every $(\lambda,\beta)\in \mathbb{D}_{\frac{1}{n}}\times (-\frac{1}{n},\frac{1}{n})$.  Therefore, using the fact that $\chi_n$ are compactly supported and using the Lebesgue dominated convergence theorem, we have that 
\[\overline{\partial}_b^{\vec{U},p}\psi_n=0\] for any unit vector $\vec{U}$ tangent to $T(\Gamma)$ for all $p\in T(\Gamma)$, and for all $n\in \mathbb{N}$.  Furthermore, \[{\partial}_b^{\vec{U},p}\psi_n\] exists for any unit vector $\vec{U}$ tangent to $T(\Gamma)$, $p\in T(\Gamma)$, and $n\in \mathbb{N}$.  Therefore by Lemma \ref{directional}, $\psi_n$ are holomorphic along analytic disks in $T(\Gamma)$.

Furthermore, it can be shown that $\psi_n\circ T\rightarrow \phi$ uniformly on $\overline{\Gamma}$ as $n\rightarrow \infty$.  Now if $\Gamma$ intersects the coordinate axes, then the analytic disks are horizontal or vertical by smoothness of $b\Omega$.  So, we perform the convolution procedure as in \cite{ClosSahut} without using a holomorphic change of coordinates.

 \end{proof}
 
 For a linear operator $T:G\rightarrow H$ between Hilbert spaces, we define the essential norm
 as \[\|T\|_e:=\inf\{\|T-K\|\, ,K:G\rightarrow H\,\text{compact}\}\]

\begin{proposition}[\cite{ClosSahut}]\label{enorm}
Let $\Omega\subset \mathbb{C}^n$ be a bounded convex domain.  Suppose $\Gamma_{\Omega}\neq \emptyset$ is defined as above.
Assume $\{\phi_n\}_{n\in \mathbb{N}}\subset C(\overline{\Omega})$ so that $\phi_n\rightarrow 0$ uniformly on $\Gamma_{\Omega}$ as 
$n\rightarrow \infty$.  Then, $\lim_{n\rightarrow \infty}\|H_{\phi_n}\|_e=0$ 

\end{proposition}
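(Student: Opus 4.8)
\emph{Proof proposal.} The plan is to split each symbol $\phi_n$ into a piece supported on a fast‑shrinking neighborhood of $\Gamma_\Omega$, where its sup‑norm is small, plus a complementary piece whose support touches $b\Omega$ only along a compact set bounded away from every analytic disk, where the Hankel operator turns out to be compact; subadditivity of the essential norm then closes the argument. Concretely, fix $\epsilon>0$ and choose $N$ with $\sup_{\Gamma_\Omega}|\phi_n|<\epsilon$ for $n\ge N$. For each such $n$ I would use uniform continuity of $\phi_n$ on the compact set $\overline\Omega$ together with compactness of $\Gamma_\Omega$ to pick $\delta_n>0$ so small that $|\phi_n|<2\epsilon$ on $W_n:=\{z\in\overline\Omega:\operatorname{dist}(z,\Gamma_\Omega)<\delta_n\}$, and then fix $\chi_n\in C^\infty(\overline\Omega)$ with $0\le\chi_n\le1$, $\chi_n\equiv1$ on $\{\operatorname{dist}(\cdot,\Gamma_\Omega)<\delta_n/2\}$ and $\operatorname{supp}\chi_n\subset W_n$, and write
\[
H_{\phi_n}=H_{\chi_n\phi_n}+H_{(1-\chi_n)\phi_n}.
\]

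The first term is easy: since $0\le\chi_n\le1$ and $\chi_n$ lives in $W_n$, $\|H_{\chi_n\phi_n}\|_e\le\|H_{\chi_n\phi_n}\|\le\|\chi_n\phi_n\|_{L^\infty(\overline\Omega)}\le\sup_{W_n}|\phi_n|<2\epsilon$ whenever $n\ge N$. For the second term, note that $(1-\chi_n)\phi_n$ vanishes on a neighborhood of $\Gamma_\Omega$, so its support meets $b\Omega$ only inside the compact set $\Sigma_n:=\{z\in b\Omega:\operatorname{dist}(z,\Gamma_\Omega)\ge\delta_n/2\}$, which by the definition of $\Gamma_\Omega$ lies at distance at least $\delta_n/2$ from every non‑constant analytic disk in $b\Omega$. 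I would then show $H_{(1-\chi_n)\phi_n}$ is compact on $A^2(\Omega)$ by localizing: invoke the localization principle for Hankel operators on bounded convex domains from \cite{CuckovicSahutoglu09}, so that it suffices to verify compactness of the local Hankel operator near each point of $\overline\Omega$. Near an interior point this is automatic, since multiplication by a function with compact support in $\Omega$ is a compact operator $A^2(\Omega)\to L^2(\Omega)$ (normal families); near a point of $b\Omega$ at which $\chi_n\equiv1$, the local symbol is identically zero; and near a point of $\Sigma_n$ there is a fixed neighborhood of $b\Omega$ containing no analytic disk, so by convexity this boundary piece satisfies Catlin's Property (P) (cf. \cite{Cat},\cite{FuSt}), whence the local $\overline\partial$‑Neumann operator is compact, and then, via the compactness estimates of \cite{StraubeBook} exactly as in \cite{CelZey}, the local Hankel operator with any bounded continuous symbol is compact. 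Granting this, $\|H_{(1-\chi_n)\phi_n}\|_e=0$, so $\|H_{\phi_n}\|_e\le\|H_{\chi_n\phi_n}\|_e<2\epsilon$ for every $n\ge N$; since $\epsilon>0$ was arbitrary, $\lim_{n\to\infty}\|H_{\phi_n}\|_e=0$.

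The hard part will be the compactness of $H_{(1-\chi_n)\phi_n}$, namely the assertion that a continuous symbol whose support meets $b\Omega$ only along a compact set uniformly separated from all analytic disks yields a compact Hankel operator on the convex domain $\Omega$. This rests on two standard but delicate ingredients that must be quoted carefully: first, the equivalence, for convex boundaries, between the absence of analytic disks and Property (P) — this is exactly where the \emph{uniform} separation of $\Sigma_n$ from the disks is used, a single boundary point lying off every disk not being enough — which is what makes the property localizable; and second, the passage from compactness of the local $\overline\partial$‑Neumann operator to compactness of Hankel operators with merely continuous (rather than $C^1$) symbols, which is precisely the continuous‑symbol refinement underlying \cite{CelZey} and \cite{ClosSahut}. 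One could equally reduce, via a Tietze extension of $\phi_n|_{\Gamma_\Omega}$, to the single statement that $H_\psi$ is compact whenever $\psi\in C(\overline\Omega)$ vanishes on $\Gamma_\Omega$ and then run the same localization, but the direct splitting above avoids even that reduction.
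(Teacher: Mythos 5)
The paper imports Proposition~\ref{enorm} from \cite{ClosSahut} without proof, so there is no internal argument to compare against; judged on its own, your proposal is essentially correct and follows the standard route (it is also the same mechanism this paper uses for boundary points away from $\Gamma_{\Omega}$ in the proof of Theorem~\ref{thmmain}). The decomposition $H_{\phi_n}=H_{\chi_n\phi_n}+H_{(1-\chi_n)\phi_n}$, the bound $\|H_{\chi_n\phi_n}\|_e\le\|\chi_n\phi_n\|_{L^\infty}\le \sup_{W_n}|\phi_n|$, and subadditivity of the essential norm are all fine, and the two external inputs you isolate are exactly the right ones: the Fu--Straube theorem that a bounded convex domain with no analytic disks in its boundary has compact $\overline{\partial}$-Neumann operator, and the passage from $C^1$ to merely continuous symbols via $\|H_{\psi}-H_{\psi_k}\|\le\|\psi-\psi_k\|_{L^\infty}$ and uniform approximation. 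Two small points deserve to be written out. First, at a point $p$ of $\Sigma_n$ you need that all of $b(B(p,r)\cap\Omega)$, not merely $B(p,r)\cap b\Omega$, is free of nonconstant analytic disks before applying Fu--Straube to the convex domain $B(p,r)\cap\Omega$; this does hold, since a disk $F$ in $\overline{B(p,r)}$ meeting the sphere is constant by the maximum principle applied to the subharmonic function $|F(\zeta)-p|^2$, so any nonconstant disk in $b(B(p,r)\cap\Omega)$ would lie in $b\Omega$ and hence in $\Gamma_{\Omega}$, which $B(p,r)$ avoids. Second, the fact that $\Gamma_{\Omega}$ is closed (it is defined as a closure) is what makes $W_n$ a genuine neighborhood and gives each point of $\Sigma_n$ a ball disjoint from every disk; the uniform $\delta_n/2$ separation is only needed so that the two local cases together cover all of $b\Omega$.
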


The next proposition is similar to the theorem in \cite{CuckovicSahutoglu09}, with one major difference, namely they assumed smoothness of the boundary.  Here, we assume the boundary is piecewise
smooth.
 
 \begin{proposition}\label{onefamily}
 Let $\Omega\subset \mathbb{C}^2$ be a bounded convex domain so that the boundary of $\Omega$ contains no analytic disks except for one continuous family, called $\Gamma_{\Omega}$.  Let $\phi\in C^{\infty}(\overline{\Omega})$ so that $\phi\circ f$ is holomorphic for any holomorphic $f:\mathbb{D}\rightarrow b\Omega$.    
 Then, $H_{\phi}$ is compact on $A^2(\Omega)$.
 \end{proposition}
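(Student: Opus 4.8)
The plan is to reduce the claim to an essential-norm estimate by approximating $\phi$ on the single family $\Gamma_\Omega$ while controlling the Hankel operators of the approximants and their errors. First I would invoke Proposition~\ref{approx} (applied in the convex setting, with the single family $\Gamma=\Gamma_\Omega$) to produce a sequence $\{\psi_n\}\subset C^\infty(\overline\Omega)$ with $\psi_n\to\phi$ uniformly on $\overline{\Gamma_\Omega}$ and with $\psi_n\circ f$ holomorphic along every analytic disk $f(\mathbb D)\subset\Gamma_\Omega$; since $\Gamma_\Omega$ is the \emph{only} family, this means $\psi_n\circ f$ is holomorphic for \emph{every} holomorphic $f:\mathbb D\to b\Omega$. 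Writing $\phi-\psi_n=:\phi_n$, we have $\phi_n\to 0$ uniformly on $\Gamma_\Omega$, so Proposition~\ref{enorm} gives $\|H_{\phi_n}\|_e\to 0$. Because $H_\phi = H_{\psi_n}+H_{\phi_n}$, it suffices to show each $H_{\psi_n}$ is compact; then $\|H_\phi\|_e\le\|H_{\psi_n}\|_e+\|H_{\phi_n}\|_e=\|H_{\phi_n}\|_e\to 0$, forcing $H_\phi$ to be compact.

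So the real content is: a $C^\infty(\overline\Omega)$ symbol $\psi$ that is holomorphic along the boundary disks (the only obstruction being the lone family $\Gamma_\Omega$) has compact Hankel operator. Here I would follow the strategy of \cite{CuckovicSahutoglu09}: since $\psi$ is smooth, $H_\psi$ is compact precisely when $\overline\partial_b\psi$ (the boundary complex-tangential part of $\overline\partial\psi$, in the sense of the directional derivatives $\overline\partial_b^{\vec U,p}\psi$ of Lemma~\ref{directional}) vanishes to the appropriate order along the set $\Gamma_\Omega$ where analytic structure lives. The hypothesis that $\psi\circ f$ is holomorphic for all disks $f$ gives exactly $\overline\partial_b^{\vec U,p}\psi=0$ on $\Gamma_\Omega$ by Lemma~\ref{directional}. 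Away from $\overline{\Gamma_\Omega}$ the boundary of $\Omega$ is strictly pseudoconvex (indeed, having no disks, Property~(P) holds locally), so the localized Hankel operator is compact there by the standard $\overline\partial$-Neumann/compactness-multiplier machinery; this localization is the result of \cite{CuckovicSahutoglu09} cited in the introduction. Patching the "good" region near $\overline{\Gamma_\Omega}$ (where $\overline\partial_b\psi=0$) with the strictly pseudoconvex region via a partition of unity and the localization lemma yields compactness of $H_\psi$.

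The subtlety flagged in the statement is that the boundary is only \emph{piecewise} smooth (this is why the proposition is stated separately from the theorem of \cite{CuckovicSahutoglu09}), so I would need the localization lemma and the compactness estimate to survive at the finitely many nonsmooth boundary points. Near such a point, either it lies off $\overline{\Gamma_\Omega}$—in which case I can still exhibit a bounded plurisubharmonic function with large complex Hessian (Property (P)) on a neighborhood, giving a local compactness estimate for $\overline\partial$ and hence for $H_\psi$—or it is a boundary point of the single family, where $\psi$ is holomorphic along the disks and the convexity pins down the affine variety structure by Proposition~\ref{prophull}, so the defect $\overline\partial_b\psi$ still vanishes there by continuity.

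\medskip

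\noindent\textbf{Main obstacle.} The hard part will be the patching near $\overline{\Gamma_\Omega}$: one must show that $\overline\partial_b\psi=0$ on $\Gamma_\Omega$ (a set that is typically a real hypersurface's subvariety, not open in $b\Omega$) is enough to extract a genuine compactness estimate for $H_\psi$, handling the transition to the strictly pseudoconvex part and the piecewise-smooth corner points uniformly. Concretely, I expect to need a quantitative version of Lemma~\ref{directional}—that $\|\overline\partial\psi\|$ is small on a shrinking boundary neighborhood of $\Gamma_\Omega$ because $\psi$ is smooth and its tangential $\overline\partial$ vanishes on $\Gamma_\Omega$—combined with the $\varepsilon$-version of the $\overline\partial$-compactness estimate off $\Gamma_\Omega$, glued by a cutoff; verifying that the commutator/error terms from the cutoff are themselves compact (they are supported away from $b\Omega$, hence smoothing) is the routine-but-essential last step.
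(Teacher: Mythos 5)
Your overall architecture (split $\phi$ into a piece vanishing on $\Gamma_{\Omega}$, handled by Proposition \ref{enorm}, plus a smooth piece that is annihilated by $\overline{\partial}$ along the disks, handled by the argument of \cite{CuckovicSahutoglu09}) matches the paper's, and your first reduction via Proposition \ref{approx} is harmless though unnecessary here, since $\phi$ is already assumed to lie in $C^{\infty}(\overline{\Omega})$. The genuine gap is in the step you yourself flag as the main obstacle, and the fix you propose there does not work: from $\overline{\partial}_b^{\vec{U},p}\psi=0$ on $\Gamma_{\Omega}$ you cannot conclude that $\|\overline{\partial}\psi\|$ is small on a shrinking neighborhood of $\Gamma_{\Omega}$. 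Holomorphy of $\psi\circ f$ along the disks controls only the complex-tangential component of $\overline{\partial}\psi$; in coordinates where $\Gamma_{\Omega}\subset\{(z_1,\alpha):z_1\in\mathbb{C},\ \alpha\in(-1,1)\}$, the normal component $\partial\psi/\partial\overline{z_2}$ is completely unconstrained and in general does not vanish on $\Gamma_{\Omega}$. Since the compactness argument from \cite{CuckovicSahutoglu09} that you want to invoke requires the \emph{full} $\overline{\partial}$ of the symbol to vanish on the set carrying the analytic structure, your patching scheme stalls exactly at this point.

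The paper closes this gap with a normal-direction correction rather than an estimate: it takes a smooth function $\rho$ vanishing on the affine variety containing $\Gamma_{\Omega}$, normalized so that $\partial\rho/\partial\overline{z_1}=0$ and $\partial\rho/\partial\overline{z_2}=1$ there, and sets $\psi:=\phi-\rho\left(\frac{\partial\phi}{\partial\overline{z_2}}\right)$. Then $\psi=\phi$ on $\Gamma_{\Omega}$, so $\phi-\psi$ vanishes on $\Gamma_{\Omega}$ and $H_{\phi-\psi}$ has zero essential norm by Proposition \ref{enorm}; at the same time the full condition $\overline{\partial}\psi=0$ holds on $\Gamma_{\Omega}$, which is precisely the hypothesis under which the cited argument yields compactness of $H_{\psi}$. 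Replacing your ``quantitative Lemma \ref{directional}'' step with this correction term repairs the proof; the rest of your outline then goes through.
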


 \begin{proof}
 Without loss of generality, we may assume \[\Gamma_{\Omega}\subset \{(z_1,\alpha):z_1\in \mathbb{C}\, , \alpha\in (-1,1)\}.\] 
 Assuming $\phi\circ f$ is holomorphic for any $f:\mathbb{D}\rightarrow b\Omega$, one can show that the tangential directional derivative $\overline{\partial}_b \phi$ exists along $\Gamma_{\Omega}$.  Furthermore $\frac{\partial\phi}{\partial\overline{z_1}}=0$ on $\Gamma_{\Omega}$.  We wish to construct smooth function $\psi\in C^{\infty}(\overline{\Omega})$ so that $\psi \equiv \phi$ on $\Gamma_{\Omega}$ and $\overline{\partial}(\psi)=0$ on $\Gamma_{\Omega}$.  To do this, we will use the idea of a defining function.
There exists a smooth function $\rho\in C^{\infty}(\mathbb{C}^2)$ so that $\rho\equiv 0$ on $\overline{\{(z_1,\alpha):z_1\in \mathbb{C}\, , \alpha\in (-1,1)\}}$ and $|\nabla \rho|>0$ on $\overline{\{(z_1,\alpha):z_1\in \mathbb{C}\, , \alpha\in (-1,1)\}}$.  Furthermore, by scaling the tangential and normal vector fields on $\overline{\{(z_1,\alpha):z_1\in \mathbb{C}\, , \alpha\in (-1,1)\}}$, we may assume 
\[\frac{\partial\rho}{\partial\overline{z_1}}|_{\overline{\{(z_1,\alpha):z_1\in \mathbb{C}\, , \alpha\in (-1,1)\}}}=0\] and \[\frac{\partial\rho}{\partial\overline{z_2}}|_{\overline{\{(z_1,\alpha):z_1\in \mathbb{C}\, , \alpha\in (-1,1)\}}}=1.\]  Now we define 
\[\psi:=\phi-\rho\left(\frac{\partial\phi}{\partial\overline{z_2}}\right).\]
Then $\overline{\partial}\psi=0$ on $\Gamma_{\Omega}$ and also $\psi=\phi$ on $\Gamma_{\Omega}$.  Then by
Proposition \ref{enorm}, $\|H_{\phi-\psi}\|_e=0$ and so $H_{\phi-\psi}$ is compact on $A^2(\Omega)$.  To show $H_{\psi}$ is compact we use the fact that $\overline{\partial}\psi=0$ on $\Gamma_{\Omega}$ together with the same argument seen in \cite{CuckovicSahutoglu09} that shows $H_{\widetilde{\beta}}$ is compact if $\overline{\partial}\widetilde{\beta}=0$ on $\Gamma_{\Omega}$.  Therefore we conclude $H_{\phi}$ is compact. 

 \end{proof}
 
 
 
\section{Proof of Theorem \ref{thmmain}}

The idea is to use the following result which will allow us to localize the 
problem.  
\begin{proposition}[\cite{CuckovicSahutoglu09}]\label{local}
Let $\Omega\subset \mathbb{C}^n$ for $n\geq 2$ be a bounded pseudoconvex domain and $\phi\in L^{\infty}(\Omega)$.  If for every $p\in b\Omega$ there exists an open neighbourhood $U$ of $p$ such that $U\cap \Omega$ is a domain and 
\[H^{U\cap \Omega}_{R_{U\cap\Omega}(\phi)}R_{U\cap\Omega}\] is compact on $A^2(\Omega)$, then $H^{\Omega}_{\phi}$ is compact on $A^2(\Omega)$.

\end{proposition}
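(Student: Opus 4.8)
The plan is to combine the standard weak-sequential criterion for compactness with a partition of unity on $\overline{\Omega}$, localizing the quantity $\|H_\phi^\Omega f\|$ itself rather than the symbol $\phi$. Recall that $H_\phi^\Omega$ is compact on $A^2(\Omega)$ exactly when $\|H_\phi^\Omega f_k\|_{L^2(\Omega)}\to 0$ for every sequence $\{f_k\}$ in the closed unit ball of $A^2(\Omega)$ converging to $0$ weakly, equivalently uniformly on compact subsets of $\Omega$. Fix such a sequence. By compactness of $b\Omega$ choose boundary points $p_1,\dots,p_m$ with neighbourhoods $U_1,\dots,U_m$ as in the hypothesis, and an open set $V$ with $\overline V\subset\Omega$ so that $V\cup U_1\cup\dots\cup U_m\supset\overline\Omega$. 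Pick $\rho_0\in C_0^\infty(V)$ and $\rho_j\in C_0^\infty(U_j)$ with $\sum_{j=0}^m\rho_j^2\equiv 1$ on $\overline\Omega$; then $\|H_\phi^\Omega f_k\|_{L^2(\Omega)}^2=\sum_{j=0}^m\|\rho_j H_\phi^\Omega f_k\|_{L^2(\Omega)}^2$, and it suffices to prove that each summand tends to $0$.

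For the interior index, $\rho_0 H_\phi^\Omega f_k=\rho_0\phi f_k-\rho_0 P^\Omega(\phi f_k)$; since $\phi f_k\rightharpoonup 0$ in $L^2(\Omega)$ we also have $P^\Omega(\phi f_k)\rightharpoonup 0$ in $A^2(\Omega)$, so $f_k$ and $P^\Omega(\phi f_k)$ both converge to $0$ uniformly on the compact set $\overline V$, giving $\|\rho_0 H_\phi^\Omega f_k\|\to 0$. For a boundary index $j\geq 1$ I would use the decomposition of $\phi f_k$ relative to the subdomain $U_j\cap\Omega$,
\[(\phi f_k)\big|_{U_j\cap\Omega}=P^{U_j\cap\Omega}\!\big((\phi f_k)\big|_{U_j\cap\Omega}\big)+H^{U_j\cap\Omega}_\phi\big(R_{U_j\cap\Omega}f_k\big),\]
restrict $H_\phi^\Omega f_k=(I-P^\Omega)(\phi f_k)$ to $U_j\cap\Omega$, and multiply by $\rho_j$, whose support is a compact subset of $U_j$ so that each term extends by zero to $\Omega$. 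Using that $P^{U_j\cap\Omega}$ fixes the holomorphic restriction $\big(P^\Omega(\phi f_k)\big)\big|_{U_j\cap\Omega}$, this yields
\[\rho_j H_\phi^\Omega f_k=\rho_j\,H^{U_j\cap\Omega}_\phi R_{U_j\cap\Omega}f_k+\rho_j\Big(P^{U_j\cap\Omega}\!\big((\phi f_k)\big|_{U_j\cap\Omega}\big)-\big(P^\Omega(\phi f_k)\big)\big|_{U_j\cap\Omega}\Big).\]
The first summand converges to $0$ in $L^2(\Omega)$ because $H^{U_j\cap\Omega}_\phi R_{U_j\cap\Omega}$ is compact by hypothesis and $\{f_k\}$ is weakly null.

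The second summand is the step I expect to be the main obstacle. Viewing it as the operator $\rho_j\big(P^{U_j\cap\Omega}R_{U_j\cap\Omega}-R_{U_j\cap\Omega}P^\Omega\big)$ (with $R_{U_j\cap\Omega}$ also denoting plain restriction of $L^2$ functions) applied to $\phi f_k$, and recalling $\phi f_k\rightharpoonup 0$ in $L^2(\Omega)$, it suffices to show this operator is compact from $L^2(\Omega)$ to $L^2(\Omega)$. This is a localization property of the Bergman projection: the boundaries of $\Omega$ and of $U_j\cap\Omega$ agree inside $U_j$, while the artificial boundary $bU_j\cap\Omega$ sits in a compact subset of $\Omega$ on which $\rho_j$ vanishes, so on $\mathrm{supp}\,\rho_j$ the difference of the two Bergman projections is regularizing. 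Concretely this operator has integral kernel $\rho_j(z)\big(\mathbf 1_{U_j\cap\Omega}(w)K_{U_j\cap\Omega}(z,w)-K_\Omega(z,w)\big)$, and one would invoke comparison estimates for $K_{U_j\cap\Omega}$ and $K_\Omega$ near the shared boundary to conclude compactness (for instance square-integrability of this kernel, which gives a Hilbert–Schmidt operator), or cite a known localization theorem for the Bergman projection directly. Granting this, all $m+1$ summands tend to $0$, hence $\|H_\phi^\Omega f_k\|_{L^2(\Omega)}\to 0$, and the weak-sequential criterion gives that $H_\phi^\Omega$ is compact on $A^2(\Omega)$.
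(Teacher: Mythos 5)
The paper offers no proof of this proposition: it is imported verbatim from \cite{CuckovicSahutoglu09}, so there is no internal argument to compare yours against. Judged on its own terms, your setup is sound through the partition of unity, the interior estimate, and the algebraic identity
\[
\rho_j H^{\Omega}_{\phi}f_k=\rho_j H^{U_j\cap\Omega}_{\phi}R_{U_j\cap\Omega}f_k+\rho_j\Bigl(P^{U_j\cap\Omega}\bigl((\phi f_k)|_{U_j\cap\Omega}\bigr)-\bigl(P^{\Omega}(\phi f_k)\bigr)|_{U_j\cap\Omega}\Bigr),
\]
and the first summand is correctly dispatched by the hypothesis. But the entire content of the proposition is concentrated in the second summand, and the justification you sketch for it does not hold up. You propose to show that $\rho_j\bigl(P^{U_j\cap\Omega}R_{U_j\cap\Omega}-R_{U_j\cap\Omega}P^{\Omega}\bigr)$ is compact on $L^2(\Omega)$ by a comparison of the kernels $K_{U_j\cap\Omega}$ and $K_{\Omega}$ near the shared boundary. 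No such comparison is available for general bounded pseudoconvex domains: localization theorems for the Bergman kernel are known only under strong additional hypotheses (strict pseudoconvexity, finite type, quantitative hyperconvexity), and the domains in this paper are precisely ones whose boundaries carry analytic disks, where the kernel's boundary behavior is delicate. The fallback suggestion that the kernel $\rho_j(z)\bigl(\mathbf 1_{U_j\cap\Omega}(w)K_{U_j\cap\Omega}(z,w)-K_{\Omega}(z,w)\bigr)$ be square integrable is far too strong: both kernels blow up like $\mathrm{dist}(z,b\Omega)^{-n-1}$ on the diagonal over the common boundary piece where $\rho_j\neq 0$, and there is no general cancellation result making the difference Hilbert--Schmidt. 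So the step "this operator is compact" is exactly the unproved assertion the proposition reduces to; citing "a known localization theorem for the Bergman projection" is not available here.

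For orientation: the proof in \cite{CuckovicSahutoglu09} does not pass through any comparison of the local and global Bergman projections. It works with the quadratic form $\|H^{\Omega}_{\phi}f_k\|^2=\langle\phi f_k,H^{\Omega}_{\phi}f_k\rangle$ (using that $H^{\Omega}_{\phi}f_k\perp A^2(\Omega)$ to delete the global projection term at the outset), inserts the cutoffs there, and then uses the local Bergman decomposition of $\phi f_k$ together with orthogonality and the $\overline{\partial}$-machinery to absorb the term involving $P^{U_j\cap\Omega}(\phi f_k)$, rather than trying to prove that the two projections nearly agree as operators. As written, your argument establishes the proposition only modulo a localization property of the Bergman projection that is not known (and is likely false) in the stated generality, so there is a genuine gap.
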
 

We will also use the following lemma appearing in \cite{CuckovicSahutoglu09}.

\begin{lemma}[\cite{CuckovicSahutoglu09}]\label{bi}
Let $\Omega_1$ and $\Omega_2$ be bounded pseudoconvex subsets of $\mathbb{C}^n$.  Suppose 
$\phi\in C^{\infty}(\overline{\Omega_1})$ so that $H_{\phi}$ is compact on $A^2(\Omega_1)$.  Let $T:\Omega_2\rightarrow \Omega_1$ be a biholomorphism with a smooth extension to the boundary.  Then $H_{\phi\circ T}$ is compact on $A^2( \Omega_1)$.

\end{lemma}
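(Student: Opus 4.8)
The plan is to realize the biholomorphism $T$ as a unitary equivalence between the two Bergman spaces and to check that this equivalence intertwines the two Hankel operators, after which compactness transfers for free. Let $J_T$ denote the holomorphic Jacobian determinant of $T$; it is nowhere vanishing on $\Omega_2$ and, by the hypothesis that $T$ extends smoothly to $\overline{\Omega_2}$, bounded on $\overline{\Omega_2}$. First I would introduce the weighted composition operator $U_T\colon L^2(\Omega_1)\to L^2(\Omega_2)$ defined by $U_T g=(g\circ T)\,J_T$. The complex change of variables formula, using $|{\det}_{\mathbb R}DT|=|J_T|^2$, gives $\|U_T g\|_{L^2(\Omega_2)}=\|g\|_{L^2(\Omega_1)}$, and the chain rule identity $(J_{T^{-1}}\circ T)\,J_T\equiv 1$ shows that $U_{T^{-1}}$ is a two-sided inverse; hence $U_T$ is unitary with $U_T^{-1}=U_{T^{-1}}=U_T^*$.

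Next I would observe that $U_T$ carries $A^2(\Omega_1)$ onto $A^2(\Omega_2)$: if $g$ is holomorphic then so is $(g\circ T)\,J_T$, and the same applies with $T$ replaced by $T^{-1}$. Since a unitary operator mapping a closed subspace onto a closed subspace maps the orthogonal complement onto the orthogonal complement, it follows that $U_T$ intertwines the Bergman projections, namely $P_{\Omega_2}U_T=U_T P_{\Omega_1}$ on $L^2(\Omega_1)$, equivalently $P_{\Omega_2}=U_T P_{\Omega_1}U_T^*$.

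The heart of the argument is then a one-line identity. Note $\phi\in C^\infty(\overline{\Omega_1})\subset L^\infty(\Omega_1)$, so $\phi\circ T$ is bounded and both Hankel operators are genuinely defined. For $f\in A^2(\Omega_2)$ write $f=U_T g$ with $g=U_T^* f\in A^2(\Omega_1)$; since multiplication by $\phi\circ T$ passes through the weighted composition, $(\phi\circ T)\,f=(\phi\circ T)(g\circ T)\,J_T=U_T(\phi g)$, and therefore
\[
H_{\phi\circ T}^{\Omega_2}U_T g=(I-P_{\Omega_2})U_T(\phi g)=U_T(I-P_{\Omega_1})(\phi g)=U_T H_\phi^{\Omega_1}g.
\]
This yields $H_{\phi\circ T}^{\Omega_2}=U_T H_\phi^{\Omega_1}U_T^*$ as operators on $A^2(\Omega_2)$, and since a compact operator composed on either side with bounded operators is compact, the compactness of $H_\phi^{\Omega_1}$ forces $H_{\phi\circ T}^{\Omega_2}$ to be compact.

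I do not expect a serious obstacle here: the construction is standard. The only points that require care are the verification of unitarity of $U_T$ via the change of variables formula — where the smoothness of $T$ up to $b\Omega_2$ is what keeps $J_T$ bounded and the boundary contributions harmless — and the bookkeeping that distinguishes the $L^2$-level identity $P_{\Omega_2}=U_T P_{\Omega_1}U_T^*$ from its restriction to the Bergman spaces when it is substituted into the definition of the Hankel operator. (We also read the conclusion as asserting compactness on $A^2(\Omega_2)$, the natural domain of $H_{\phi\circ T}$.)
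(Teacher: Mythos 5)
Your proof is correct, and it is the standard unitary-equivalence argument: the paper itself states this lemma as a citation from \cite{CuckovicSahutoglu09} without supplying a proof, and the intertwining identity $H_{\phi\circ T}^{\Omega_2}=U_T H_\phi^{\Omega_1}U_T^*$ via the weighted composition operator $U_Tg=(g\circ T)J_T$ is exactly the mechanism that reference uses. You were also right to read the conclusion as compactness on $A^2(\Omega_2)$; the ``$A^2(\Omega_1)$'' in the statement is a typo, since $\phi\circ T$ lives on $\Omega_2$.
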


As we shall see, this collection of all non-constant analytic disks in $b\Omega$ will play a crucial role in our understanding of the compactness of Hankel operators on various domains in $\mathbb{C}^n$ for $n\geq 2$.  There are several cases to consider depending on where $p\in b\Omega$ is located. 

\begin{enumerate}
\item $p\in \Gamma_{\Omega}\subset b\Omega$ but away from the coordinate axes.
\item $p\in b\Omega\setminus \Gamma_{\Omega}$.
\item $p\in \{z_1=0\}\cup \{z_2=0\}$
\end{enumerate}

\vspace{.1in}

We will first consider the case where $p$ is away from $\Gamma_{\Omega}$. We let $p:=(p_1,p_2)\in b\Omega$ and assume $p\in b\Omega\setminus \Gamma_{\Omega}$.  So there exists an $r>0$ sufficiently small so that the ball $b(B(p,r)\cap \Omega)$ contains no analytic disks.  Furthermore, there exists a biholomorphism $T:B(p,r)\rightarrow \mathbb{C}^2$ so that $T(B(p,r)\cap \Omega)$ is a convex domain. Therefore, since any analytic disk in $bT(B(p,r)\cap \Omega)$ must be the image (under $T$) of a disk in $b(B(p,r)\cap\Omega)$, there are no analytic disks in $bT(B(p,r)\cap \Omega)$.  By convexity and compactness of the $\overline{\partial}$-Neumann operator, the Hankel operator 
\[H_{\phi\circ T^{-1}}^{T(B(p,r)\cap \Omega)}\] is compact on $A^2(T(B(p,r)\cap \Omega))$.  And so 
this proves $H^{U\cap \Omega}_{R_{U\cap\Omega}(\phi)}$ is compact on $A^2(U\cap\Omega)$ where $U:=B(p,r)$.\\

If $p\in (\{z_1=0\}\cup \{z_2=0\})\cap b\Omega$, then by smoothness of the domain, either $p$ is contained in an analytic disk, $p$ is a limit point of a sequence of analytic disks, or $p$ is contained in part of the boundary satisfying property (P).  If $p\in b\Omega$ is contained in a non-degenerate analytic disk, then locally the analytic disks are horizontal or vertical, by smoothness of the domain.  Without loss of generality, assume the family of analytic disk is vertical.  So, using the argument in \cite{ClosSahut}, we can approximate the continuous symbol $\phi$ uniformly on $\Gamma_{U\cap\Omega}$ for some ball $U$ centered at $p$ with a sequence of smooth functions $\psi_n$ so that $\psi_n$ is holomorphic along any analytic disk contained in $b(U\cap\Omega)$.  As in \cite{ClosSahut}, we use \cite{CuckovicSahutoglu09} and the uniform approximation on $\Gamma_{U\cap\Omega}$ to conclude that $H^{U\cap\Omega}_{\phi|_{U\cap\Omega}}$ is compact on $A^2(U\cap\Omega)$.\\

Note that if $p\in b\Omega$ is contained in part of the boundary satisfying property (P) (see \cite{Cat}), then the local $\overline{\partial}$-Neumann operator $N_1^{U\cap\Omega}$ is compact since there exists a convex neighbourhood $U$ of $p$ so that $U\cap\Omega$ is convex, and so $H^{U\cap\Omega}_{\phi|_{U\cap\Omega}}$ is compact on $A^2(U\cap\Omega)$.\\

Lastly, if $p\in b\Omega\setminus (\{z_2=0\}\cup \{z_1=0\})$ and $p\in \Gamma_{\Omega}$.  We will first assume $p$ is contained in a limit set of a discrete sequence of families of analytic disks.  We may assume discreteness due to Lemma \ref{lembiholo}, Proposition \ref{propconvex}, and smoothness of the boundary of $\Omega$.  Then by Lemma \ref{disklim}, this limit set exactly equals $\{p\}$.  We will first assume $p$ is not contained in the closure of a single non-trivial analytic disk.

 Let $U:=B(p,r)$ chosen so that $U\cap \Omega$ is a domain and $T(U\cap\Omega)$ is convex for some biholomorphism $T:U\rightarrow \mathbb{C}^2$.  Denote this discrete collection of continuous families of analytic disks as $\{\Gamma_j\}_{j\in \mathbb{N}}\subset b(U\cap\Omega)$.  Furthermore, we may assume \[\Gamma_{T(U\cap\Omega)}=\bigcup_{j\in \mathbb{N}}\Gamma_j.\]

Then $\{T(\Gamma_j)\}_{j\in \mathbb{N}}$ is a discrete collection of families of affine analytic disks.  Then for each $j\in \mathbb{N}$ there exists open pairwise disjoint neighborhoods $V_j$ with a strongly pseudoconvex boundary so that $T(\Gamma_j)\subset V_j$.  Let $\rho_j$ be smooth cutoff functions so that $\rho_j\equiv 1$ on a neighborhood of $T(\Gamma_j)$ and $\rho_j$ are compactly supported in $V_j$.  Define \[\widetilde{\phi}_j:=\rho_j (\phi\circ T^{-1}-\phi(p_1,p_2)).\]  We wish to show $H_{ \widetilde{\phi}_j}$ are compact on $A^2(T(U\cap\Omega))$ for all $j\in \mathbb{N}$.  By Lemma \ref{onefamily} and Proposition \ref{approx}, we approximate $\phi\circ T^{-1}-\phi(p_1,p_2)$ with a sequence $\{\psi^j_n\}_{n\in \mathbb{N}}\subset C^{\infty}(\mathbb{C}^2)$ so that 
$\psi^j_n\rightarrow \phi\circ T^{-1}-\phi(p_1,p_2)$ uniformly on $\overline{T(\Gamma_j)}$ as $n\rightarrow \infty$ and 
$\psi^j_n$ are holomorphic along $T(\Gamma_j)$.  Then, $\rho_j\psi^j_n$ are holomorphic along any analytic disk in $bT(U\cap\Omega)$ for all $j,n\in \mathbb{N}$ and $\rho_j\psi^j_n\in C^{\infty}(\mathbb{C}^2)$.  Fix $j, n\in \mathbb{N}$.  Then, there exists a function 
$\delta_{j,n}\in C^{\infty}(\mathbb{C}^2)$ so that 
\begin{enumerate}
\item $\overline{\partial}\delta_{j,n}=0$ on $\Gamma_{T(U\cap\Omega)}$.
\item $\delta_{j,n}=\rho_j\psi^j_n$ on $\Gamma_{T(U\cap\Omega)}$.

\end{enumerate}

Therefore by an argument similar to the proof of Proposition \ref{onefamily}, $H^{T(U\cap\Omega)}_{\delta_{j,n}}$,
$H^{T(U\cap\Omega)}_{\rho_j\psi^j_n-\delta_{j,n}}$, and therefore $H^{T(U\cap\Omega)}_{\rho_j\psi^j_n}$ are compact on $A^2(T(U\cap\Omega))$ for all $j,n\in \mathbb{N}$.\\

Furthermore, \[\rho_j\psi^j_n\rightarrow \widetilde{\phi}_j\] uniformly on $\Gamma_{T(U\cap\Omega)}$ as $n\rightarrow \infty$. Then by convexity of $T(U\cap\Omega)$ and Proposition \ref{enorm}, $H_{\widetilde{\phi}_j}$ are compact on $A^2(T(U\cap\Omega))$ for all $j\in \mathbb{N}$.  One can show that 
\[\alpha_N:=\sum_{j=1}^N\widetilde{\phi}_j\] converges uniformly to $\phi\circ T^{-1}-\phi(p_1,p_2)$ on $\Gamma_{T(U\cap\Omega)}$ as $N\rightarrow \infty$.  Also, $H_{\alpha_N}$ are compact on $A^2(T(U\cap\Omega))$ for all $N\in \mathbb{N}$ as the finite sum of compact operators.  Furthermore, $\alpha_N\in C^{\infty}(\overline{T(U\cap\Omega)})$ for all $N$.  Then by Lemma \ref{bi}, 
$H_{\alpha_N\circ T}$ are compact on $A^2(U\cap\Omega)$ for all $N$ and so 
$H^{U\cap\Omega}_{\phi|_{U\cap\Omega}}$ is compact on $A^2(U\cap \Omega)$.

So, we have the following.  For all $p:=(p_1,p_2)\in b\Omega$ there exists $r>0$ so that 
$B(p,r)\cap\Omega$ is a domain and \[H^{B(p,r)\cap\Omega}_{\phi|_{U\cap\Omega}}\] is compact on $A^2(B(p,r)\cap \Omega)$.
Then by composing with the restriction operator $R:A^2(\Omega)\rightarrow A^2(B(p,r)\cap\Omega)$, we have
that \[H^{B(p,r)\cap\Omega}_{\phi|_{U\cap\Omega}}R\] is compact on $A^2(\Omega)$.  Then by Proposition \ref{local},
$H_{\phi}$ is compact on $A^2(\Omega)$.

Next, we assume there exists a non-trivial analytic disk $\Gamma_0\in bT(U\cap\Omega)$ so that $p\in \overline{\Gamma_0}$ and 
$\{p\}$ is the limit set of $\{\Gamma_j\}_{j\geq 1}$.  Then we can represent 
\[\Gamma_{U\cap\Omega}=\bigcup_{j\geq 0,\,\theta\in [0,2\pi]}\{e^{i\theta}\Gamma_j\}.\]
For $0<r<1$ we define 
\[\Gamma_r:=\bigcup_{f(\mathbb{D})\subset \Gamma_{U\cap\Omega},\,\theta\in [0,2\pi]}\{e^{i\theta}f(r\mathbb{D})\}\]
By convolving $\phi$ with a mollifier in $[0,2\pi]$, there exists $\{\tau_n\}_{n\in \mathbb{N}}\subset C(\overline{\Omega})$ so
that $\tau_n\rightarrow \phi$ uniformly on $\overline{\Gamma_r}$ as $n\rightarrow \infty$, and 
for every $(z_1,z_2)\in \Gamma_r$ and $\vec{T}$ complex tangent to $bU\cap\Omega$ at $(z_1,z_2)$ the directional derivative 
of $\tau_n$ in the direction of $\vec{T}$ at $(z_1,z_2)$ exists.  Furthermore, by the smoothness of $\tau_n$ in the $\theta$ variable, 
the directional derivative in the complex normal direction at $(z_1,z_2)$ also exists.  Thus $\tau_n$ satisfies the compatibility condition
for the Whitney extension theorem.  See \cite{stein} and \cite{mal} for more information on the Whitney extension theorem.  Therefore, there exits $\widetilde{\tau}_n\in C^1(\overline{\Omega})$ so that
$\widetilde{\tau}_n\equiv \tau_n$ on $\Gamma_r$ and both tangential and normal directional derivatives of $\widetilde{\tau}_n$ agree with $\tau_n$.  That is, $\tau_n\circ f$ are holomorphic on $\mathbb{D}$ for any $n\in \mathbb{N}$ and $f(\mathbb{D})\subset \Gamma_{U\cap\Omega}$.  Thus $H_{\tau_n}$ is compact on $A^2(\Omega)$ by \cite{CuckovicSahutoglu09} and Proposition \ref{enorm}.  And so using Proposition \ref{enorm} again and letting $r\rightarrow 1^-$, we conclude $H_{\phi|_{U\cap\Omega}}R_{U\cap\Omega}$ is compact on $A^2(\Omega)$.  And so by Proposition \ref{local}, $H_{\phi}$ is compact on $A^2(\Omega)$.

 \bibliographystyle{amsalpha}
\bibliography{rrrefs}

 \end{document}